\documentclass{amsart}
\usepackage{mathtools,amsmath,amssymb,amsthm,array,etoolbox}
\usepackage{color}
\usepackage{cases}
\usepackage{ulem}
\usepackage{lineno}
\usepackage{amsmath}
\usepackage{cancel}

\numberwithin{equation}{section}

\newtheorem{theorem}{Theorem}[section]
\newtheorem{lemma}[theorem]{Lemma}

\theoremstyle{definition}
\newtheorem{definition}[theorem]{Definition}

\newtheorem{proposition}[theorem]{Proposition}
\theoremstyle{remark}
\newtheorem{remark}[theorem]{Remark}

\newcommand{\R}{\mathbb{R}}

\newcommand{\be}{\begin{equation}}
	\newcommand{\ee}{\end{equation}}
\newcommand{\bse}{\begin{subequations}}
	\newcommand{\ese}{\end{subequations}}
\def\bea{\begin{eqnarray}}
	\def\eea{\end{eqnarray}}
 \allowdisplaybreaks
\begin{document}

\title{On a Problem Posed by Brezis and Mironescu}
\author{Fanghua Lin, Malkiel Shoshan, }
\address{Courant Institute of Mathematical Sciences, New York University, NY 10012, USA}
\email{linf@cims.nyu.edu}
\author{Changyou Wang}
\address{Department of Mathematics, Purdue University, West Lafayette, IN 47907, USA}
\email{wang2482@purdue.edu}
\begin{abstract}
The purpose of this note is to present a positive answer to an open problem proposed in the recent book \cite{Brezis-Mironescu} by H. Brezis and P. Mironescu. It has been stated in this book 
{\it Sobolev Maps to the Circle} as Proposition 4.3. We demonstrate, in particular, the value of the least mass of the area minimizing integral rectifiable currents with a given boundary equals to the infimum of areas among smoothly immersed submanifolds with the same boundary, under the assumption that the boundary is that of a smooth submanifold. \\
{\bf Keywords}: area-minimizing, integral current, smooth submanifold, smooth boundary, singular set \\
{\bf MSC(2020): 49Q15, 49Q20, 53A10}
\end{abstract}
\date{}
\maketitle
\centerline{\it Dedicated to the memory of Professor Haim Brezis with admiration}

\section{Introduction}
In {their book} \textit{Sobolev Maps to the Circle} \cite[Section 4.2]{Brezis-Mironescu}, 
Brezis  and Mironescu 
introduced a ``distinguished class of currents". Let $\Omega \subset \mathbb{R}^N$, $N \geq 2$, be 
a bounded convex open set. Given an integer $l$, $0 \leq l \leq N-2$, we set

\begin{align*}\mathcal{F}^l &= \Big\{T \ \Big|\ T \mbox{ is a closed smooth $l$-dimensional manifold such that } 
 T= \partial \Gamma\\
 &\qquad\quad\text{ for some } (l+1)\text{-integral rectifiable current } \Gamma \subset\Omega
 \ \text{with}\  \mathbf{M}(\Gamma)<\infty \Big\},
 \end{align*}
 {where $\mathbf{M}(\Gamma)$ denotes the mass of the current $\Gamma$}.
For $T \in \mathcal{F}^l$, we set

\begin{align}\label{mass1} A^l(T)= \inf \Big\{\mathbf{M}(\Gamma)\Big|
\Gamma \subset \Omega \text{ is } (l+1)\text{-integral rectifiable current,}\ \partial\Gamma = T \Big\}.
\end{align}

We also consider the subset $\mathcal{F}_0^l,\  \subset \mathcal{F}^l,$\  {that is defined by}
\begin{align*}
\mathcal{F}_0^l &= \Big\{T\in \mathcal{F}^l \ \Big|\ T = \partial M \text{ for some smoothly immersed and oriented}\\
& \qquad\qquad\quad\ \ (l+1)\text{-dimensional submanifold } M {\subset \Omega\times \R},\ \text{with}\  |M|<\infty \Big\},
\end{align*}
{{where $\big|M\big|$ denotes area of the submanifold $M$}}. We also set, for $T \in \mathcal{F}_0^l$,\\
\begin{align}\label{mass2}
A_0^l(T) &= \inf \Big\{ \big|M\big|\ \Big|\ M \text{ is a } (l+1)\text{-dimensional smoothly immersed}\\
& \qquad\qquad\qquad\text{and oriented submanifold in } \Omega\times \R,
\text{ with } \partial M = T \Big\}.\nonumber
\end{align}

Brezis and Mironescu \cite{Brezis-Mironescu} 
claim that $T$ being a smooth boundary of some smoothly immersed {$(l+1)$-dimensional}
submanifold is  sufficient to ensure the following statement. 

\begin{theorem} \label{equal} {\it For any $T \in \mathcal{F}_0^l$, $0\le l\le N-2$, the two definitions of  least mass spanned by $T$ are equivalent, i.e. minimizing the areas of smoothly immersed and oriented  $(l+1)$-dimensional submanifolds in $\Omega\times \R$ whose boundary is $T$ is equivalent to minimizing the masses of {$(l+1)$-}integral rectifiable currents whose boundary is $T$, {or equivalently},
 \[
 A_0^l(T) = A^l(T).
 \]}
 \end{theorem}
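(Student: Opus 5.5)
Since every smoothly immersed oriented $(l+1)$-dimensional submanifold $M$ with $\partial M=T$ and $|M|<\infty$ defines, through its oriented immersion, an integral rectifiable current $[\![M]\!]$ with $\partial[\![M]\!]=T$ and $\mathbf{M}([\![M]\!])\le |M|$, the inequality $A^l(T)\le A_0^l(T)$ is immediate. I would only have to check one point here: the competitors for $A_0^l$ live in $\Omega\times\R$, while those for $A^l$ live in $\Omega$. Since $\Omega$ is convex, I can use the nearest-point projection $\pi$ onto $\Omega\times\{0\}$. This map is $1$-Lipschitz, and it fixes $T\subset\Omega\times\{0\}$. So $\pi_\#[\![M]\!]$ is an admissible current in $\Omega$ with mass at most $|M|$.

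The real content is the reverse inequality $A_0^l(T)\le A^l(T)$. I would fix $\varepsilon>0$ and use Federer--Fleming compactness to pick an area-minimizing integral current $\Gamma$ with $\partial\Gamma=T$. Convexity of $\Omega$ ensures that the minimizer stays in $\overline\Omega$. In fact, by the convex hull property it lies in the convex hull of $T$, which is a compact subset of $\Omega$.

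The task is then to approximate $\Gamma$ in mass by a smooth immersed oriented manifold with the same boundary $T$. The order of steps I would follow is this:
\begin{enumerate}
\item By Allard's boundary regularity (or Hardt--Simon when $l+1=N-1$), near the smooth boundary $T$ the current $\Gamma$ is a smooth embedded manifold with boundary, with some multiplicity. Where $T$ is the boundary of a smooth submanifold, density considerations give multiplicity one in a collar neighborhood $U$ of $T$. I therefore freeze $\Gamma$ in $U$.
\item Away from $U$, I apply the strong approximation theorem of Federer. This replaces $\Gamma\llcorner(\Omega\setminus U)$ by a polyhedral (or smooth) integral current $P$ such that $\mathbf{M}(P)\le \mathbf{M}(\Gamma)+\varepsilon$ and $P$ is close in flat norm. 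I then glue $P$ to the collar along a slice $\partial U'$, correcting the boundary mismatch by a small-mass homotopy/cone piece. This uses the slicing lemma to choose a good slice with controlled mass.
\item A polyhedral integral current is a sum of oriented simplices with integer multiplicities. I realize it as the image of an immersion of an abstract manifold as follows. I take copies of the simplices according to multiplicity and glue them along codimension-one faces, pairing faces with opposite orientations; this is possible because the boundary cancels there. The resulting pseudomanifold has singularities only in codimension $\ge 2$. I then smooth the corners and resolve the codimension-two singularities.
\end{enumerate}

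The main obstacle is the resolution in the last step: turning a polyhedral chain into a genuinely smooth immersion without adding much area, especially at the lower-dimensional skeleta, where many sheets meet. This is exactly where the extra $\R$ factor in $\Omega\times\R$ helps. Lifting the different sheets to different heights $t\in\R$ by small amounts separates sheets that meet along codimension-two strata, so only transversal self-intersections remain, and these are allowed for immersions. Each lift and each smoothing of corners costs area at most $C\delta$ for a perturbation size $\delta$. I would first attempt this via a general-position argument: perturb the gluing maps by small graphs over the $t$-direction and then mollify, checking that the immersion is preserved. I would then choose $\delta$ so that the total area is at most $A^l(T)+2\varepsilon$. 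Letting $\varepsilon\to0$ gives $A_0^l(T)\le A^l(T)$.
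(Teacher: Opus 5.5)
Your first inequality is correct and is the paper's argument. The gap is in step (3). A quick way to see that something essential is missing: steps (2)--(3) never use the hypothesis $T=\partial M_0$. Run them on a smooth embedded $T\cong\mathbb{CP}^2$ lying on a round sphere, where Allard's boundary theorem really does give a multiplicity-one collar. They would then produce an immersed oriented $5$-manifold whose boundary is diffeomorphic to $\mathbb{CP}^2$, which Thom's theorem forbids; this is the point of the paper's remark after Proposition 3.1.

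Concretely, glue the simplices of $P$ along paired $l$-faces and separate sheets along codimension-two faces. A neighbourhood of a point on a face of codimension $k\ge 3$ is still (face)$\times$(cone over the link). That link is a closed oriented $(k-1)$-dimensional pseudomanifold which need not be a sphere and need not even be an oriented boundary: a triangulated $\mathbb{CP}^2$ can be the link of a vertex as soon as $l+1\ge 5$. This is an intrinsic singularity of the abstract domain, not a crossing of distinct sheets. So lifting sheets in the $\R$-direction, general position, or mollification cannot remove it: any oriented immersed manifold replacing a small neighbourhood of such a vertex, with the same boundary, would be an oriented null-cobordism of the link. No local smoothing argument can close this.

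The missing idea is a \emph{global} cancellation that uses $M_0$, which is the core of the paper's proof. The paper excises a thin neighbourhood $E'$ of $\operatorname{sing}\Gamma$, using Almgren's theorem and slicing so that $\mathcal{H}^l(\partial E')\le C\varepsilon$. The regular remainder $\Gamma\setminus(E'\cup\mathcal{S})$ is a smooth oriented cobordism between $\partial E'$ and $T$, so gluing $M_0$ along $T$ gives a smooth oriented manifold bounded by $\partial E'$. A tiny spherically inverted copy of it is attached to the holes through a thin truncated cone of area $\lesssim\mathcal{H}^l(\partial E')$; the extra $\R$-direction only serves to make the junctions immersed. Your polyhedral route needs the same step: excise a thin neighbourhood of the codimension-$\ge 3$ skeleton of $P$, whose boundary has arbitrarily small $l$-measure, and cap it off this way.

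Separately, the justification in step (1) is wrong. Multiplicity one near $T$ does not follow from $T$ bounding a smooth manifold. Two coplanar concentric circles with the same orientation have as their only minimizer $2[\![D_r]\!]+[\![D_R\setminus D_r]\!]$ (calibrated by $dx\wedge dy$), which has density $3/2$ along the inner circle. Moreover, in codimension $\ge 2$ Allard's boundary theorem needs density close to $1/2$, which is not available in general. This collar issue is less serious than the gap above, but your argument does not settle it.
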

 
 \begin{remark} {{We would like to point out that Theorem 1.1 is slightly different from the Proposition 4.3 of \cite{Brezis-Mironescu}, where $\mathcal{F}_0^l$ 
 satisfies a stronger assumption, namely, $T\in \mathcal{F}_0^l$ must be
 the boundary of some smooth immersed $(l+1)$-dimensional manifold in $\Omega$;  while for $T\in\mathcal{F}_0^l$ $A_0^l(T)$  is defined to be
 the infimum of areas of smooth immersed $(l+1)$-dimensional manifolds in $\Omega$, rather than in $\Omega\times\R$ which is what we assume in Definition \ref{mass2}. Of course, area minimizing currents with the same boundary must be supported in $\Omega$ by the convex hull property.  On the other hand, our simple construction of smooth ones which approximate the area-minimizing ones in both the total mass and the flat norm need to be in  $\Omega\times\R$. In fact, the factor $\R$ can be replaced by an arbitrarily small interval 
$(0,  \varepsilon)$ under the same assumption as in the Proposition 4.3 of \cite{Brezis-Mironescu}. As one referee suggested that if one applies a more general and more involved construction by
\cite{Almg}, one may conclude the exact statement as the Proposition 4.3 of \cite{Brezis-Mironescu}. For simplicity, we leave Theorem 1.1 as what it is. }}
 \end{remark} 
 
 {Brezis-Mironsecu presented a proof of Proposition 4.3 in \cite {Brezis-Mironescu} for the cases $l = 0, N-2$ and attributed the intermediate cases $l=1,\cdots, N-3$ to the first author of this paper.
 Here, we provide a proof of the result as stated in Theorem 1.1 above.}
 
\smallskip
\indent Let $T \in \mathcal{F}^l_0$, {and let } $M$  be a smoothly immersed and oriented $(l+1)$-{dimensional }
submanifold {{in} $\Omega\times\R$ such that $T=\partial M$}, and let $\Gamma {\subset\Omega}$ be an $(l+1)$-integral rectifiable area minimizing current with $\partial \Gamma = T$ that realizes the infimum in the definition of $A^l(T),$
{i.e. $\mathbf{M}(\Gamma)=A^l(T)$}. The existence of such an integral rectifiable current {$T$}
is a well-known {result} in the geometric measure theory, due to Federer and Fleming \cite{Federer}.

\indent {Let $$\Pi(x, x')=x:\Omega\times \R\to\Omega$$
be the orthogonal projection map. Then 
it is easy to see that $\overline{M}=\Pi(M)\subset\Omega$ is an $(l+1)$-integral rectifiable current satisfying $\partial \overline{M}=T$.
Moreover, since $\Pi$ has Lipschitz norm $1$, it follows that the area of $M$ is no less than the mass of $\overline{M}$, i.e.,
\[
\big|\overline{M}\big|\le \big|M\big|.
\]
}
{From this, we} clearly have $A_0^l(T) \geq A^l(T)$. 

To show the reverse direction {$A_0^l(T) \leq A^l(M)$},  it suffices to show that for any $\varepsilon >0$, we can find a smoothly immersed and oriented $(l+1)$-submanifold $\Gamma_0{\subset\Omega\times\R}$ such that {{$\partial\Gamma_0=T$, and}}
\begin{equation}\label{reverse1}
 \mathbf{M}(\Gamma_0) <A^l(T)+\varepsilon= \mathbf{M}(\Gamma) +\varepsilon .
 \end{equation}
\indent The basic idea  to prove \eqref{reverse1} is to
construct $\Gamma_0$ from the original $\Gamma$ by cutting out a small tubular neighborhood  
of the singular set $\mathcal{S}$ of $\Gamma$, and pasting in $\mathcal{S}_\varepsilon$ which is something 
{\it smoothable} whose  area is also small, see Section 3 below.

\indent A crucial result needed here is the partial regularity theorem, due to Almgren \cite{Almgren}, 
that asserts that the singular set $\mathcal{S}$ of an area minimizing 
{$(l+1)$-integral rectifiable} current has Hausdorff dimension at most $(l-1)$. Using this we will show that a tubular neighborhood 
$E'$ of the singular set $\mathcal{S}$ will have small $(l+1)${-dimensional} Hausdorff measure.

We construct $\mathcal{S}_\varepsilon$ by using spherical inversion (see Section 4 below) to get a projection of $B = (\Gamma \setminus (E'\cup\mathcal{S})) \cup T \cup M_0 $ into a small sphere. The sphere will contain a scaled down inverted ``image'' $\widetilde{B}$ of $ B$ which will have small volume. $B$ will have ``holes'' where we cut out a tubular neighborhood, and the image $\widetilde{B}$ will also have holes. We connect $B$ and $\widetilde{B}$ through a cone $C$
(see Section 5) through the respective holes, so $\mathcal{S}_\varepsilon = C\cup \widetilde {B}$, and the resulting surface $(\Gamma \setminus (E'\cup\mathcal{S})) \cup T \cup \mathcal{S}_\varepsilon$ is {\it smoothable} and result in a smoothly immersed and oriented submanifold.

Lastly, we present in Section 7 an example to show why we need to take the infimum over smooth manifolds instead of the minimum. There exists 
a $T{\in\mathcal{F}_0^l}$ such that the infimum $A^l_0(T)$ is not achieved.

\section{Notation and Preliminaries}
We let $\Omega \subset \mathbb{R}^N$ be {a bounded convex} open set throughout. 
{For $0\le m\le N$, l}et $\mathcal{D}^m(\Omega)$ 
{denote}  the space of smooth, compactly supported \textit{m}-forms on $\Omega$,  that {is,}
\[ \mathcal{D}^m(\Omega) = \Big\{ \omega = \sum_{\alpha\in \mathbb{I}_{m, N}} \omega_\alpha dx^\alpha \ \Big| \  \omega_\alpha \in C_c^\infty(\Omega) \Big\} ,\]
{equipped} with the topology 
\[ \omega^k = \sum_{\alpha \in \mathbb{I}_{m,N}} \omega^k_\alpha dx^\alpha \;  \longrightarrow \; \omega = \sum_{\alpha \in \mathbb{I}_{m,N}} \omega_\alpha dx^\alpha  \]
assuming the existence of a compact set $K \subset \Omega$ such that $\text{spt} (\omega^k_\alpha) \subset K$ for all 
$\alpha \in \mathbb{I}_{m, N}, k \geq 1$, and $\displaystyle\lim_{k\to\infty} D^\beta \omega^k_\alpha = D^\beta \omega_\alpha$ for all $\alpha \in \mathbb{I}_{m,N}$ and all multi-index $\beta$. 

An $m$-dimensional \text{current} $T$ in $\Omega$ is {defined to be }a continuous linear functional on $\mathcal{D}^m(\Omega)$. 
More precisely, $T:\mathcal{D}^m(\Omega) \to \mathbb{R}$ is continuous in the sense that 
$$\big\langle T, \omega^k \big\rangle \xrightarrow{k\nearrow\infty} \big\langle T, \omega \big\rangle,$$ 
where $\omega^k \in \mathcal{D}^m(\Omega)$ is such that there exists an open set $U\Subset\Omega$ with 
$\text{spt}(\omega^k) \subset U$ for every $k$, and $\omega^k \to \omega$ in $\mathcal{D}^m(\Omega)$.

The support of a {$m$-dimensional} current $T$ 
is defined by $\text{spt} T = \Omega \setminus \bigcup V$, where $V \Subset\Omega$ is 
such that $\langle T, \omega \rangle =0$ for all $\omega \in \mathcal{D}^m(\Omega)$ with $\text{spt} (\omega) \subset V$.

We can define the boundary of an {$m$-dimensional} current $T$, {$\partial T$ as an $(m-1)$-
dimensional current,} in accordance with Stokes theorem, by 
\[ \big\langle \partial T, \tilde{\omega}\big\rangle  =\big \langle  T, d \tilde{\omega} \big\rangle, \ \ \forall\tilde{\omega}\in \mathcal{D}^{m-1}(\Omega), \]
where 
\[d \tilde{\omega} = \sum_{j=1}^N \sum_{\alpha \in \mathbb{I}_{m-1,N} } \frac{\partial \tilde{\omega}_\alpha}{\partial x^j} dx^j \wedge dx^\alpha
\in \mathcal{D}^m(\Omega) \]
is the exterior derivative of $\tilde{\omega}$.
The mass of an {$m$-dimensional} current $T$ is defined by 
\[\mathbf{M}(T) = \sup_{|\omega| \leq 1, \omega \in \mathcal{D}^m(\Omega)} \big\langle T, \omega \big\rangle.\]
Given a smooth oriented $m$-submanifold $M \subset \mathbb{R}^N$, {it induces an $m$-dimensional current [M] by}
$$\big\langle [M], \omega \big\rangle = \int_M \omega, \ \ \omega\in \mathcal{D}^m(\R^N),$$
then {the mass of $[M]$ equals to the $m$-dimensional Hausdorff measure of $M$, that is} $\big|[M]\big| = \mathcal{H}^m(M)$.

In 1959, Federer and Fleming \cite {Federer60} identified a class of currents which they deemed valuable in the study of calculus of variations, due to their compactness properties. These are the integral rectifiable currents.

\begin{definition} \textit{Let $k$ be an integer with $0 \leq k \leq N-1$, and $T$ be an {$k$-dimensional} 
current with finite mass. $T$ is an $k$-\text{integer rectifiable current} in $\mathbb{R}^N$ if there exist a sequence of oriented 
{$k$-dimensional} submanifolds $\Sigma_i$ and a sequence of Borel subset $K_i$ such that} $K_i \subset \Sigma_i $, 
$\displaystyle\sum_i \textit{H}^k(K_i) < \infty$, \textit{and}
\[ \langle T, \omega \rangle = \sum_i \int_{K_i} \omega, \ \ \  \forall \omega \in \mathcal{D}^m(\R^N) .\]
\end{definition}
In the case that $T$ is a $k$-rectifiable current with $ K_i$'s as above, for an $\mathcal{H}^k$-measurable set $A$, 
we will take {$T \lfloor A$} to be the $k$-rectifiable current defined by
\[ \big\langle T\lfloor A,\omega \big\rangle = \sum_i \int_{K_i \cap A} \omega, \ \  \forall \omega \in \mathcal{D}^m(\R^N) .\]

To define integer rectifiable currents with multiplicity, we recall a few notions. For $x_1, x_2, ..., x_m \in \mathbb{R}^N$, $x_1 \wedge x_2 \wedge \cdots \wedge x_m$ is a simple $m$-vector, formal linear combinations of simple $m$-vectors are $m$-vectors, and $\Lambda_m(\mathbb{R}^N)$ is the vector space of $m$-vectors. Then, as in \cite{Krantz} and \cite{Simon}, we have

\begin{definition}\textit{Let $1 \leq m \leq N$, $T$ be an {$m$-dimensional} current with finite mass. $T = \tau (\Gamma, \theta, \xi)$ is an \text{integer-multiplicity (or integral) rectifiable m-current}, if there exist $\Gamma, \theta$ and $\xi$, such that:}
\begin{enumerate}
\item \textit{$\Gamma \subset \Omega$ is an $\mathcal{H}^m$-measurable, countable $m$-rectifiable set,
with $\mathcal{H}^m(\Gamma \cap K) < \infty$ for each compact $K \subset \Omega$,}
\item \textit{$\theta$ is a locally $\mathcal{H}^m$-integrable, nonnegative, integer-valued function,}
\item \textit{$\xi: \Gamma \to \Lambda_m(\mathbb{R}^N)$ is $\mathcal{H}^m$-measurable 
such that  $\xi(x)$ is a simple unit $m$-vector in $T_x \Gamma$ for $\mathcal{H}^m$-a.e. $x \in \Gamma$, }
\item \textit{the action of $T$ {on $\mathcal{D}^m(\Omega)$} is given by
\[ \langle T, \omega \rangle = \int_\Gamma \langle \omega(x), \xi(x) \rangle \theta(x) d\mathcal{H}^m(x), 
\ \forall\omega \in \mathcal{D}^m(\Omega).
\]}
\end{enumerate}
\end{definition}
Recall an integral multiplicity one rectifiable current is an integer-multiplicity current with multiplicity $\theta \equiv 1$.

We also recall the following co-area formula and the slicing Lemma \cite[Theorem 7.6]{Krantz}.

\begin{theorem}[Coarea Formula] Given an $\mathcal{H}^m$-measurable, countable $m$-rectifiable set $\Gamma$ and a Lipschitz function $f:\mathbb{R}^N \to \mathbb{R}$, let 
$$\Gamma_+ =\Big\{ x \in \Gamma\ \big|\ T_x\Gamma,\  \nabla^\Gamma f(x) \text{ exist,\  and }\ \nabla^\Gamma f(x) \neq 0 \Big\}.$$
Then, for $L^1$-a.e. $t \in \mathbb{R}$,
 \[\Gamma_t = f^{-1}(t) \cap \Gamma_+  \textit{ is countably } \mathcal{H}^{m-1}\textit{-rectifiable}, \]
 \[\int_{- \infty}^\infty \big( \int_{\Gamma_t} g \,d\mathcal{H}^{m-1} \big)\, dt = \int_{\Gamma} \big|\nabla^\Gamma f\big|g \, d\mathcal{H}^m\]
for every nonnegative $\mathcal{H}^m$-measurable function $g$ on $\Gamma$.
\end{theorem}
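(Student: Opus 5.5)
We prove the formula first for a single embedded $C^1$ piece and then glue the pieces together using countable rectifiability. Write $\Gamma \subset N_0 \cup \bigcup_{j\ge1} \Sigma_j$ with $\mathcal{H}^m(N_0)=0$ and each $\Sigma_j$ an embedded $m$-dimensional $C^1$ submanifold. After disjointifying, replace $\Sigma_j$ by Borel sets $\Gamma_j\subset\Sigma_j$ that partition $\Gamma\setminus N_0$. At $\mathcal{H}^m$-a.e.\ $x\in\Gamma_j$ the approximate tangent space $T_x\Gamma$ coincides with $T_x\Sigma_j$. By Rademacher's theorem applied to $f|_{\Sigma_j}$ (Lipschitz on a $C^1$ manifold), the tangential gradient $\nabla^\Gamma f(x)=\nabla^{\Sigma_j}f(x)$ then exists for $\mathcal{H}^m$-a.e.\ $x$. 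Both sides of the claimed identity are countably additive in $\Gamma$ by monotone convergence, and a nonnegative measurable $g$ is an increasing limit of simple functions. It therefore suffices to prove
\[
\int_{-\infty}^\infty \mathcal{H}^{m-1}\big(f^{-1}(t)\cap A\big)\,dt=\int_A |\nabla^{\Sigma}f|\,d\mathcal{H}^m
\]
for Borel $A\subset \Sigma\cap\{\nabla^\Sigma f\neq 0\}$, where $\Sigma$ is a single $C^1$ patch. Measurability of $t\mapsto \mathcal{H}^{m-1}(f^{-1}(t)\cap A)$ also has to be checked; the standard route is to prove it first for compact $A$ by upper semicontinuity and then extend by monotone class.

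Next we dispose of the null sets. We need the Eilenberg-type inequality
\[
\int^*\mathcal{H}^{m-1}\big(f^{-1}(t)\cap Z\big)\,dt\le C\,\mathrm{Lip}(f)\,\mathcal{H}^m(Z)
\]
for arbitrary $Z$. It is proved by covering $Z$ with small sets $S_i$ and using $\mathcal{H}^{m-1}_\delta(f^{-1}(t)\cap S_i)\lesssim (\operatorname{diam}S_i)^{m-1}$ on an interval of $t$ of length at most $\mathrm{Lip}(f)\operatorname{diam}S_i$. This shows that $N_0$ and the sets where $T_x\Gamma$ or $\nabla^\Gamma f$ fail to exist contribute nothing to the left side, for a.e.\ $t$. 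It also yields the countable $\mathcal{H}^{m-1}$-rectifiability of $\Gamma_t$ for a.e.\ $t$. Indeed, off a null set of $t$, $\Gamma_t$ is covered by the level sets of $f$ inside the pieces on which $f$ is $C^1$ with nonzero gradient, and those level sets are $C^1$ hypersurfaces of $\Sigma_j$.

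For the main computation we use a Whitney/Lusin-type approximation. Given $\epsilon>0$, write $\Sigma\cap\{\nabla^\Sigma f\ne0\}$, up to an $\mathcal{H}^m$-null set, as a countable union of Borel pieces $A_k$. On each $A_k$, $f$ agrees with a $C^1$ function $f_k$ on $\Sigma$ satisfying $\nabla^\Sigma f_k=\nabla^\Sigma f$ and $\nabla^\Sigma f_k\neq0$. On such a piece the implicit function theorem gives local $C^1$ coordinates $(y',t)$ on $\Sigma$ with $f_k=t$. In these coordinates the identity reduces, via Fubini and the area formula, to the pointwise Jacobian identity
\[
J_\Sigma(y',t)=|\nabla^\Sigma f|^{-1}\,J_{\{f=t\}}(y').
\]
This is linear algebra: the $m$-volume of a parallelepiped equals the $(m-1)$-volume of its base, lying in $\ker d^\Sigma f$, times the height in the direction $\nabla^\Sigma f/|\nabla^\Sigma f|$.

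I expect the main obstacle to be the measure-theoretic bookkeeping rather than the geometry. The difficulty is making sure that replacing $f$ by the $C^1$ functions $f_k$ on the pieces $A_k$, and discarding exceptional sets, changes neither side. The Eilenberg inequality handles the left side, and the agreement of tangential gradients $\mathcal{H}^m$-a.e.\ on $A_k$ handles the right side. With these in hand, summing over $k$ and the patches $j$ completes the argument. Alternatively, since the paper cites the result, one may simply invoke \cite[Theorem 7.6]{Krantz} and \cite{Simon}, where exactly this scheme is carried out.
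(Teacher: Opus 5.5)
Your sketch is correct and follows a standard route (reduction to $C^1$ patches, the Eilenberg inequality to discard $\mathcal{H}^m$-null sets and to get rectifiability of $\Gamma_t$, $C^1$ Lusin approximation of $f$, and the Jacobian identity $J_\Sigma=|\nabla^\Sigma f|^{-1}J_{\{f=t\}}$); this is consistent with the paper, which gives no proof of its own and simply recalls the statement from \cite{Krantz} and \cite{Simon}. One minor slip: for compact $A$ it is $t\mapsto\mathcal{H}^{m-1}_\delta(f^{-1}(t)\cap A)$, not $t\mapsto\mathcal{H}^{m-1}(f^{-1}(t)\cap A)$, that is upper semicontinuous, although on your $C^1$ patches measurability in $t$ already follows from Tonelli in the coordinates $(y',t)$.
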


\begin{definition} \textit{The slice of an integer-multiplicity $m$-current $T = \tau(\Gamma, \theta, \xi)$ \textit{by a Lipschitz function} $f$ \textit{is an integer-multiplicity} $(m-1)$\textit{-current} $\langle T, f ,t \rangle  = \tau(\Gamma_t, \theta_t, \xi_t)$, \textit{where} $\theta_t = \theta \vert_{\Gamma_t},$ \textit{and} $\xi_t = \xi \large\llcorner \frac{\nabla^\Gamma f}{|\nabla^\Gamma f|}$.}
\end{definition}

\begin{lemma}[Slicing] For every open set $W \subset \Omega$,
\[ \int_\infty^\infty \mathbf{M}(\langle T, f, t \rangle\lfloor{W} ) \,dt \leq (\text{ess} \sup_{\Gamma \cap W} 
\big|\nabla^\Gamma f\big|) \mathbf{M}(T\lfloor W). \]
\end{lemma}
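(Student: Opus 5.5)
The statement just above is the Slicing Lemma, and I would prove it directly from the Coarea Formula together with the definition of the slice $\langle T,f,t\rangle=\tau(\Gamma_t,\theta_t,\xi_t)$. The plan is to first compute the mass of each localized slice exactly, and then integrate in $t$ and apply coarea with a well-chosen weight.

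\emph{Step 1: mass of a slice.} Fix $t$ such that $\Gamma_t=f^{-1}(t)\cap\Gamma_+$ is countably $\mathcal{H}^{m-1}$-rectifiable, which holds for a.e.\ $t$ by the Coarea Formula. Since $\nabla^\Gamma f(x)\in T_x\Gamma$ is nonzero, $\xi(x)$ is a simple unit $m$-vector spanning $T_x\Gamma$, and $\nu=\nabla^\Gamma f/|\nabla^\Gamma f|$ is a unit vector in that plane. Hence $\xi_t=\xi\llcorner\nu$ is again a simple unit $(m-1)$-vector, namely the orthogonal complement of $\nu$ in $T_x\Gamma$ with the induced orientation. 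Therefore
\[
\mathbf{M}(\langle T,f,t\rangle\lfloor W)=\int_{\Gamma_t\cap W}\theta\,d\mathcal{H}^{m-1}.
\]
The inequality ``$\le$'' follows immediately from $|\langle\omega,\xi_t\rangle|\le|\omega|$. The reverse inequality is not needed for the lemma.

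\emph{Step 2: integrate and apply coarea.} Integrating Step 1 in $t$ and applying the Coarea Formula with $g=\theta\,\chi_W$, which is nonnegative and $\mathcal{H}^m$-measurable, gives
\[
\int_{-\infty}^{\infty}\mathbf{M}(\langle T,f,t\rangle\lfloor W)\,dt\le\int_{-\infty}^{\infty}\int_{\Gamma_t}\theta\chi_W\,d\mathcal{H}^{m-1}\,dt=\int_{\Gamma\cap W}|\nabla^\Gamma f|\,\theta\,d\mathcal{H}^m .
\]
Bounding $|\nabla^\Gamma f|$ by its essential supremum over $\Gamma\cap W$ leaves $\int_{\Gamma\cap W}\theta\,d\mathcal{H}^m$, which equals $\mathbf{M}(T\lfloor W)$ for an integer-multiplicity rectifiable current. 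This last identity is the standard fact that the mass of $\tau(\Gamma,\theta,\xi)$ is $\int\theta\,d\mathcal{H}^m$: choose $\omega$ approximating $\xi^\ast$ on compact subsets, via Lusin's theorem and the density of $\mathcal{D}^m$. The integral over $t$ is over the whole real line; the lower limit ``$\infty$'' in the statement is evidently a typo for $-\infty$.

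\emph{Main obstacle.} The only delicate point is measure-theoretic. We must check that $t\mapsto\mathbf{M}(\langle T,f,t\rangle\lfloor W)$ is measurable, and that the points of $\Gamma$ where $\nabla^\Gamma f=0$ or is undefined do not contribute. Measurability follows because, by Step 1, this function is dominated by the measurable function $t\mapsto\int_{\Gamma_t\cap W}\theta\,d\mathcal{H}^{m-1}$ given by coarea; if needed, one can work with upper integrals. The degenerate set contributes nothing to either side: slices are defined only on $\Gamma_+$, and on $\Gamma\setminus\Gamma_+$ the right-hand integrand $|\nabla^\Gamma f|\theta$ vanishes $\mathcal{H}^m$-a.e. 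With these two points handled, the inequality follows.
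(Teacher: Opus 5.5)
Your argument is correct, and it is the standard coarea derivation of this inequality. The paper gives no proof of its own and simply quotes the lemma from \cite[Theorem 7.6]{Krantz}, so you are following essentially the same route.

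One small point: domination by a measurable function does not by itself make $t\mapsto\mathbf{M}(\langle T,f,t\rangle\lfloor W)$ measurable. The equality you state in Step 1 does give measurability, and its reverse inequality follows from the same Lusin-type approximation you use for $\mathbf{M}(T\lfloor W)$. Your upper-integral fallback is also enough for the inequality.
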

 
 Recall that $T$ is called an integral current, if both $T$ and $\partial T$ are integer rectifiable. 
 Integral currents satisfy the following compactness result, see \cite{Federer60} and \cite[Lemma 6.3.11]{Simon}.

\begin{theorem}[Federer-Fleming Compactness] If $\{ T_k \}$ is a sequence of $m$-integral currents in $\Omega$ satisfying
\[ \sup_k\Big\{\mathbf{M}(T_k) +\mathbf{M}(\partial T_k)\Big\} < \infty, \]
then there exists an $m$-integral current $T$ such that after passing to a subsequence,
$T^k\rightharpoonup T$.
\end{theorem}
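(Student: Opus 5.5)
The plan is to follow the classical route: first extract a weakly convergent subsequence using only the mass bounds, and then show that the limit is again integral. The first part is soft. Fix a countable dense subset of $\mathcal{D}^m(\Omega)$ (dense for the $C^0$ norm on each compact exhaustion $K_j\Subset\Omega$). Since $|\langle T_k,\omega\rangle|\le \mathbf{M}(T_k)\sup|\omega|$ is uniformly bounded, a diagonal argument gives a subsequence along which $\langle T_k,\omega\rangle$ converges for every $\omega$ in that set. The uniform mass bound then extends convergence to all of $\mathcal{D}^m(\Omega)$, and the limit $T$ is a current with
\[
\mathbf{M}(T)\le\liminf_k\mathbf{M}(T_k).
\]
Since $\langle\partial T_k,\tilde\omega\rangle=\langle T_k,d\tilde\omega\rangle\to\langle\partial T,\tilde\omega\rangle$, also $\partial T_k\rightharpoonup\partial T$ with $\mathbf{M}(\partial T)\le\liminf_k\mathbf{M}(\partial T_k)<\infty$. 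So $T$ and $\partial T$ are currents of finite mass.

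The real content is the closure theorem: $T$ and $\partial T$ are integer-multiplicity rectifiable. It suffices to prove this for $T$, since $\partial T$ is then the limit of the integral $(m-1)$-currents $\partial T_k$, with $\partial\partial T_k=0$, and the same statement applies one dimension lower. I would argue by induction on $m$.

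\emph{Base case $m=0$.} With bounded mass, each $T_k$ is a sum of at most $C$ signed Dirac masses with integer weights. Passing to a further subsequence, the points and weights converge, so $T$ is again such a sum.

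\emph{Inductive step.} I would use the Slicing Lemma with the coordinate functions $f(x)=x^j$, and more generally the projections onto $m$-planes. For a.e.\ $t$ the slices $\langle T_k,f,t\rangle$ are integer-multiplicity $(m-1)$-currents. The identity
\[
\partial\langle T_k,f,t\rangle=\pm\langle\partial T_k,f,t\rangle
\]
together with the Slicing Lemma and Fatou's lemma gives, for a.e.\ $t$,
\[
\liminf_k\Big(\mathbf{M}(\langle T_k,f,t\rangle)+\mathbf{M}(\partial\langle T_k,f,t\rangle)\Big)<\infty.
\]
By the induction hypothesis, along suitable subsequences the slices converge to integral $(m-1)$-currents. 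One must also check that these limits are the slices of $T$. For this I would estimate the flat distance between $\langle T_k,f,t\rangle$ and $\langle T,f,t\rangle$ after integrating in $t$, using the homotopy formula $\langle S,f,t\rangle=\partial(S\lfloor\{f<t\})-(\partial S)\lfloor\{f<t\}$ with $S=T_k-T$.

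Repeating the slicing $m$ times reduces everything to $0$-dimensional slices. This shows that $T$ has integer-valued $0$-dimensional slices for almost every projection and almost every fibre. A finite-mass current with this property is integer rectifiable, by the structure theorem (rectifiability via slicing: the density is positive and integer $\|T\|$-a.e., and the Besicovitch–Federer projection theorem excludes a purely unrectifiable part).

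The main obstacle is the rectifiability of the limit. Weak convergence alone can produce diffuse limits, and the bound on $\mathbf{M}(\partial T_k)$ is exactly what rules this out. I expect the hardest step to be the final passage from integer slices to rectifiability of $T$, i.e.\ excluding a purely unrectifiable part of $\|T\|$. If the slicing route becomes too technical, I would fall back on the Federer–Fleming deformation theorem. That theorem approximates each $T_k$ in flat norm by polyhedral integral currents on a cubical grid of size $\varepsilon$, with mass controlled by $C(\mathbf{M}(T_k)+\varepsilon\mathbf{M}(\partial T_k))$. One then uses the isoperimetric inequality to show that $T$ is a flat limit of integral polyhedral chains with bounded mass and boundary mass. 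Either way, the crux is that the closure of integral currents under weak convergence with mass and boundary-mass bounds rests on a rectifiability criterion.
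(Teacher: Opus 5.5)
The paper gives no proof of this statement. It quotes it from Federer--Fleming and Simon, where closure comes from the deformation theorem, the isoperimetric inequality and the Besicovitch--Federer structure theorem. Your main route (weak extraction, then closure by induction on $m$ through slices by linear functions) is instead the slicing approach of White and Ambrosio--Kirchheim. Its soft parts are fine: the extraction of $T$, lower semicontinuity, the reduction from $\partial T$ to $T$, the base case, and the Fatou argument.

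The identification of the slice limits is understated, though. The subsequence along which $\langle T_k,f,t\rangle$ has bounded mass depends on $t$, so you need a $t$-independent convergence of the slices. The formula $\langle S,f,t\rangle=\partial(S\lfloor\{f<t\})-(\partial S)\lfloor\{f<t\}$ with $S=T_k-T$ only gives a bounded flat norm, since $\mathbf{M}(T_k-T)$ does not tend to $0$. What you need is $\mathbf{F}(T_k-T)\to 0$ locally in $\Omega$. Then $T_k-T=R_k+\partial Q_k$ with $\mathbf{M}(R_k)+\mathbf{M}(Q_k)\to 0$, and slicing $R_k$ and $Q_k$ yields $\int\mathbf{F}(\langle T_k-T,f,t\rangle)\,dt\to 0$. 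You must also prove, without using compactness, that weak convergence with bounded $\mathbf{M}(T_k)+\mathbf{M}(\partial T_k)$ implies flat convergence. One way is the estimate $\mathbf{F}(S-S*\phi_\epsilon)\le\epsilon\,(\mathbf{M}(S)+\mathbf{M}(\partial S))$ for normal $S$.

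The genuine gap is the final step, from integer slices to rectifiability, which is the entire content of the closure theorem.

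\begin{itemize}
\item What you do get: integer $0$-slices, through $\|T\lfloor\pi^\#dy\|\le\int\|\langle T,\pi,y\rangle\|\,dy$ and Eilenberg's inequality, give $\|T\|\ll\mathcal{H}^m$.
\item What is missing: absolute continuity alone does not make $\|T\|$ concentrated on a set of $\sigma$-finite $\mathcal{H}^m$-measure. Without that, the Besicovitch--Federer theorem cannot be used to discard a purely unrectifiable part.
\item The fact that would supply it is $\Theta^{*m}(\|T\|,x)>0$ for $\|T\|$-a.e.\ $x$. You only assert this in your parenthesis, and nothing in your argument produces it. Also, integrality of the density is an output of rectifiability, not an input.
\end{itemize}

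In the classical proof such a density bound is proved separately, before the structure theorem enters, using the deformation theorem and the isoperimetric inequality applied to $T\lfloor B_r(x)$ at small scales. In the slicing proofs one instead uses the extra structure of $T$ again, for instance that $y\mapsto\langle T,\pi,y\rangle$ has bounded variation in the flat norm, or one quotes a rectifiable-slices theorem explicitly.

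Your fallback has the same hole. Exhibiting $T$ as a flat limit of integral polyhedral chains only shows that $T$ is a finite-mass integral flat chain, and rectifiability of such chains is precisely the closure theorem. So the proposal reduces the statement to its core but does not prove that core.
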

The compactness property yields the existence of area minimizing currents, see \cite[Lemma 7.2.1]{Simon}.

\begin{theorem} If $\Gamma$ is an $(m-1)$-integer rectifiable current {{with finite mass}}, 
then there exists an $m$-integer rectifiable current $T_0$ such that

\[ \mathbf{M}(T_0) = \inf \Big\{\mathbf{M}(T)\big|\  \partial T=\Gamma, \text{ T is m-integer rectifiable} \Big\} .\]
\end{theorem}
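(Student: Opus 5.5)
We argue by the direct method of the calculus of variations, using the Federer--Fleming compactness theorem stated just above. Put
\[
m_0=\inf\Big\{\mathbf{M}(T)\ \big|\ \partial T=\Gamma,\ T \text{ is } m\text{-integer rectifiable}\Big\}.
\]
The argument has four steps: show the competing class is nonempty so that $m_0<\infty$; take a minimizing sequence; extract a limit by compactness; and check that the limit is admissible and attains $m_0$.

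\emph{Step 1: $m_0<\infty$.} Necessarily $\partial\Gamma=0$, since $\partial\partial T=0$. In our application $\Gamma$ has compact support: it is $T\in\mathcal{F}^l$. Fix a point $p$ and use the cone construction $T_*=p\times\!\!\!\!\times\Gamma$, the push-forward of $[0,1]\times\Gamma$ under $h(t,x)=(1-t)p+tx$. The homotopy formula gives
\[
\partial(p\times\!\!\!\!\times\Gamma)=\Gamma-p\times\!\!\!\!\times\partial\Gamma=\Gamma .
\]
It also gives the bound $\mathbf{M}(p\times\!\!\!\!\times\Gamma)\le \frac{R}{m}\mathbf{M}(\Gamma)$, where $R=\sup_{x\in\operatorname{spt}\Gamma}|x-p|$. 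So $T_*$ is an integer rectifiable competitor of finite mass, and $m_0<\infty$. When the ambient set is the convex set $\Omega$ and $p\in\Omega$, the cone stays in $\Omega$ by convexity.

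\emph{Step 2: minimizing sequence.} Choose integer rectifiable $T_k$ with $\partial T_k=\Gamma$ and $\mathbf{M}(T_k)\to m_0$. Then
\[
\sup_k\big\{\mathbf{M}(T_k)+\mathbf{M}(\partial T_k)\big\}\le m_0+1+\mathbf{M}(\Gamma)<\infty .
\]
Each $T_k$ is an integral current, because $\partial T_k=\Gamma$ is integer rectifiable.

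\emph{Step 3: compactness.} By the Federer--Fleming compactness theorem, a subsequence satisfies $T_k\rightharpoonup T_0$ for some $m$-integral current $T_0$.

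\emph{Step 4: the limit is admissible and minimizing.}
\begin{itemize}
\item Boundary: $\partial$ is continuous under weak convergence, since $\langle\partial T_k,\omega\rangle=\langle T_k,d\omega\rangle\to\langle T_0,d\omega\rangle$. Hence $\partial T_0=\Gamma$.
\item Mass: mass is weakly lower semicontinuous, because it is a supremum of the continuous functionals $T\mapsto\langle T,\omega\rangle$ over $|\omega|\le1$. Hence $\mathbf{M}(T_0)\le\liminf_k\mathbf{M}(T_k)=m_0$.
\end{itemize}
Since $T_0$ is admissible, $\mathbf{M}(T_0)\ge m_0$, and so $\mathbf{M}(T_0)=m_0$.

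\emph{Main obstacle.} All the real difficulty sits inside the compactness theorem, which we take as given: the weak limit must again be integer rectifiable. Granting that theorem, the only delicate points are Step 1 and support control. To keep $\operatorname{spt}T_0\subset\Omega$ (or at least in $\overline{\Omega}$), we will work in $\mathbb{R}^N$ and compose each $T_k$ with the nearest-point projection onto the convex set $\overline{\Omega}$. This projection is $1$-Lipschitz, so it does not increase mass, and it fixes $\Gamma$, so the boundary is unchanged. We do this before taking limits.
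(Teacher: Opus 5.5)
Your proposal is correct and follows the same route as the paper, which gives no argument beyond ``the compactness property yields the existence'' plus a citation to Simon: take a minimizing sequence, apply Federer--Fleming compactness, and use continuity of $\partial$ and weak lower semicontinuity of $\mathbf{M}$. You also rightly make explicit what the statement leaves implicit: $\partial\Gamma=0$, the admissible class must be nonempty (your cone construction gives this when $\mathrm{spt}\,\Gamma$ is compact), and the minimizer must satisfy $\partial T_0=\Gamma$.
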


We recall the following monotonicity {inequality}
for  area minimizing rectifiable currents, see \cite{Federer} or \cite[Theorem 9.3]{Morgan}.

\begin{theorem}[Monotonicity] 
Let $T$ be an area-minimizing $m$-rectifiable current, and $x \in \text{spt} \; T$. Then, for $0 < r < \text{dist}(x, \text{spt} \; \partial T)$, 
\[ \frac{\mathbf{M}(T \cap B_r(x))}{\alpha_m r^m}\]
is monotonically increasing with respect to $r$, where $\alpha_m$ is the Lebesgue measure of the unit ball in $\mathbb{R}^m$.
\end{theorem}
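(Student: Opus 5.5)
The plan is the classical cone comparison argument. Fix $x\in\text{spt}\,T$ and write $d(y)=|y-x|$. For $0<r<\text{dist}(x,\text{spt}\,\partial T)$ set $f(r)=\mathbf{M}(T\lfloor B_r(x))$. The function $f$ is nondecreasing, so it is differentiable for a.e. $r$. Its distributional derivative $Df$ is a nonnegative measure whose absolutely continuous part is $f'\,dr$ and whose singular part is nonnegative. We may discard the countably many $r$ for which $\mathbf{M}(T\lfloor \partial B_r(x))>0$.

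\textbf{Step 1 (slices).} Apply the Slicing Lemma to the Lipschitz function $d$, whose gradient along $\Gamma$ has norm at most $1$, on the open set $W=B_b(x)\setminus \overline{B_a(x)}$. This gives
\[\int_a^b \mathbf{M}(\langle T,d,t\rangle)\,dt\le f(b)-f(a).\]
Consequently $\mathbf{M}(\langle T,d,r\rangle)\le f'(r)$ for a.e. $r$. We also use the standard slicing identity
\[\langle T,d,r\rangle=\partial(T\lfloor B_r(x))-(\partial T)\lfloor B_r(x).\]
Since $r<\text{dist}(x,\text{spt}\,\partial T)$, the last term vanishes. Hence for a.e. $r$ the slice $S_r:=\langle T,d,r\rangle$ is an integral $(m-1)$-current on $\partial B_r(x)$ with $\partial(T\lfloor B_r(x))=S_r$, and in particular $\partial S_r=0$.

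\textbf{Step 2 (cone competitor).} Let $C_r=x \,⨯\, S_r$ be the cone over $S_r$ with vertex $x$, that is, the pushforward of $[0,1]\times S_r$ under $(t,y)\mapsto x+t(y-x)$. The homotopy formula together with $\partial S_r=0$ gives $\partial C_r=S_r$. Because $S_r$ lies on the sphere of radius $r$, the area formula gives
\[\mathbf{M}(C_r)\le \tfrac{r}{m}\,\mathbf{M}(S_r).\]
Now form $T'=T-T\lfloor B_r(x)+C_r$. Then $\partial T'=\partial T$, so $T'$ is an admissible competitor. The cone $C_r$ is supported in $\overline{B_r(x)}$, and $T\lfloor(\mathbb{R}^N\setminus B_r(x))$ gives no mass to $\partial B_r(x)$ for our choice of $r$. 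Therefore mass is additive, and minimality of $T$ yields
\[ f(r)\le \mathbf{M}(C_r)\le \tfrac{r}{m}\,\mathbf{M}(S_r)\le \tfrac{r}{m}\, f'(r)\quad\text{for a.e. } r.\]

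\textbf{Step 3 (integration).} Set $g(r)=r^{-m}f(r)$. Then
\[Dg=r^{-m}Df-m\,r^{-m-1}f\,dr.\]
The absolutely continuous part of $Dg$ is $r^{-m-1}\big(rf'-mf\big)\,dr$, which is $\ge 0$ by Step 2. The singular part is $r^{-m}(Df)^{s}\ge 0$. Hence $Dg\ge0$, so $g$ is nondecreasing, and dividing by $\alpha_m$ gives the stated monotonicity.

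The main obstacle is the justification in Step 2 that the cone has mass at most $\tfrac rm\mathbf{M}(S_r)$ and that the comparison is legitimate. This requires the slice to be an integral current with zero boundary sitting on the sphere, and mass to be additive across $\partial B_r(x)$. I would handle both through the coarea formula and by the a.e. choice of $r$. Care is also needed not to assume that $f$ is absolutely continuous, which is why Step 3 uses the measure derivative rather than $f'$ alone.
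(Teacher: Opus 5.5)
Your argument is correct, and it is the classical cone-comparison proof. The paper does not prove this statement itself; it cites Federer and Morgan's Theorem 9.3, which argue exactly as you do: slicing gives $\mathbf{M}(\partial(T\lfloor B_r(x)))\le f'(r)$ for a.e.\ $r$, the cone competitor gives $f(r)\le \frac{r}{m}\mathbf{M}(\partial(T\lfloor B_r(x)))$, and the singular part of $Df$ is nonnegative. The one step worth an extra line is passing from $Dg\ge 0$ to monotonicity of $g$ at every $r$ rather than for an a.e.\ representative; this holds because $f$ is left-continuous (the balls are open), or simply because $f$ is monotone, so $g(r^-)\le g(r)\le g(r^+)$.
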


One of the most significant results in Geometric Measure Theory is Almgren's  regularity 
{theorem} of area minimizing currents.

We define the set of regular points of {an $m$-integral rectifiable} current: 
For $x \in \text{spt }T$, $x \in \text{reg} \; T$ if there exists {a}
ball $B_r(x)$ and a positive integer $k$ such that
\[ \langle T, \omega \rangle = k \int_M \omega,\  {\ \forall \omega\in \mathcal{D}^m(B_r(x))},\]
for an embedded $m$-dimensional $C^1$ oriented submanifold $M$. 
Note that $\text{reg }T$ is a relatively open set of $\text{spt } T$.  We define the set of singular points of $T$ as $ \text{sing} \; T = \text{spt} \; T \setminus \text{reg} \; T$ which is relatively closed in $\text{spt } T$. 

We have the following partial regularity result, due to Almgren \cite{Almgren} and \cite{Lellis}.

\begin{theorem} [The Almgren Regularity] \textit{ Let $N \geq 2$, $0 < l \leq N-2$ be integers. Let $\Omega \subset \mathbb{R}^N $ be 
{a bounded, convex open set}, and $T$ be an $l$-integer rectifiable area minimizing current with finite mass in $\Omega$. Then if $l =1$,} $\text{sing } T \cap \Omega$\textit{ is empty. If $l \geq 2$}, $\text{sing }T \cap \Omega$ \textit{has 
Hausdorff dimension at most $l-2$}.
\end{theorem}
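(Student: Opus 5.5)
This theorem is not something the paper proves from scratch. It is the deep partial regularity theorem of Almgren, with the streamlined proof of De Lellis--Spadaro, and we would quote it. Still, here is the route we would follow to establish it. Throughout, ``area minimizing'' means minimizing among integral currents with the same boundary, and all statements are interior, in $\Omega\setminus\text{spt}\,\partial T$.

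\emph{The case $l=1$.} Stationarity forces every tangent cone at an interior point to be a finite union of oriented rays with integer multiplicities and zero total boundary. The Monotonicity theorem stated above guarantees that tangent cones exist.

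Minimality then excludes any genuine junction. If two rays meet at an angle less than $\pi$, replacing them near the vertex by a shortcut chord strictly decreases length. So every tangent cone is a line with an integer multiplicity $k$.

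A monotonicity and excess argument for one-dimensional varifolds then shows that near such a point $\text{spt}\,T$ is a single straight segment. This means every interior point is regular, so $\text{sing}\,T\cap\Omega=\emptyset$.

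\emph{The case $l\ge2$: structure of the argument.} We would proceed in three steps.
\begin{enumerate}
\item Combine the Monotonicity theorem with Federer--Fleming compactness to obtain tangent cones at every interior point. These cones are area-minimizing and have integer densities.
\item Apply Federer's dimension reduction argument. It shows $\dim_{\mathcal H}\text{sing}\,T\le l-2$, provided the following \emph{key claim} holds: an area-minimizing $l$-cone whose singular set has dimension $\le l-2$ away from the vertex has no singular set of dimension exceeding $l-2$.
\item Reduce the key claim to a statement about flat points. Federer's reduction leaves two kinds of cones.
\begin{itemize}
\item Cones with a spine of dimension $l-1$. These are unions of half-planes meeting along an $(l-1)$-plane. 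The one-dimensional analysis above rules them out, since their cross-section is a stationary, minimizing union of rays.
\item Flat cones, i.e.\ planes with integer multiplicity $Q$. One must show that the set of points with a flat tangent cone of multiplicity $Q\ge2$, which are nonetheless singular, has dimension at most $l-2$.
\end{itemize}
\end{enumerate}

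\emph{Multiplicity one.} Where $Q=1$, De Giorgi--Allard excess decay gives $C^{1,\alpha}$ regularity nearby.

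\emph{Higher multiplicity: the main obstacle.} The case $Q\ge 2$ is the heart of the matter, and we would follow Almgren's programme.
\begin{enumerate}
\item Approximate $T$ near a flat point by the graph of a Lipschitz $Q$-valued function. Show that this function is almost Dir-minimizing.
\item Develop the regularity theory of Dir-minimizing $Q$-valued functions. Almgren's frequency function is monotone for them, and a dimension reduction for these functions shows that their singular set has dimension at most $l-2$.
\item Construct a center manifold, which averages out the sheets. Then re-approximate $T$ as a $Q$-valued normal graph over it.
\item Prove almost-monotonicity of the frequency for this normal approximation, and use it to run a blow-up argument. The blow-up produces a nontrivial Dir-minimizer. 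If the flat singular set had positive $\mathcal H^{l-2+\delta}$ measure, that Dir-minimizer would have too large a singular set, which is a contradiction.
\end{enumerate}

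The construction of the center manifold, together with the error estimates that keep the frequency bounded, is the step we expect to be hardest. For that part we would rely wholesale on the De Lellis--Spadaro papers rather than reproduce it.
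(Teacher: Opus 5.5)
Your proposal is correct and matches the paper, which gives no proof of this result and simply cites Almgren and De Lellis--Spadaro. Your outline (tangent cones plus Federer's reduction to flat points, De Giorgi--Allard at multiplicity one, and Almgren's $Q$-valued/center-manifold/frequency programme for $Q\ge 2$) is the argument behind that citation, and restricting to $\Omega\setminus\text{spt}\,\partial T$ is the right reading of the statement.

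One wording fix: the ``key claim'' in your step 2 is vacuous as written. What dimension reduction actually needs is what you state in step 3: no minimizing cone has an exactly $(l-1)$-dimensional spine, and the singular points with a flat tangent cone of multiplicity $Q\ge 2$ must be controlled separately.
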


Finally, we introduce the concept of a cobordism. Its relevance to our problem is a consequence of Thom's Cobordism Theorem.

\begin{definition} \textit{Given two disjoint $n$-dimensional closed, oriented and smoothly immersed submanifolds, $M_1$ and $M_2$, a 
{cobordism} is an $(n+1)$-dimensional smoothly immersed compact submanifold $N$ such that the boundary of $N$ is the disjoint union of $M_1$ and $M_2$, in which case we say $M_1$ and $M_2$ are {cobordant}.}
\end{definition}

We can check that cobordism is an equivalence relation. The sets of manifolds cobordant to one another are cobordism classes. An $n$-dimensional closed manifold $M$ is \textit{null-cobordant} if it is cobordant with the empty manifold. In this case $M$ is the boundary of some 
$(n+1)$-dimensional manifold.

Thom's Cobordism Theorem \cite{Thom} gives a classification of the cobordism classes for each dimension. In particular, not all closed manifolds are null-cobordant, i.e. not all closed manifolds are boundaries.

\section{Cutting out a tubular neighborhood}

We start this section by restating the main Theorem \ref{equal}.

\begin{proposition} \textit{Let $\Omega \subset \mathbb{R}^N$, $N\geq 3$, be an open bounded set. 
Given an integer $l$ with $0 < l < N-2$ and $T\in \mathcal{F}_0^l$, let $A^l(T)$ and $A_0^l(T)$ be given by
\eqref{mass1} and \eqref{mass2} respectively. Then
\[ A_0^l(T) = A^l(T) .\]}
\end{proposition}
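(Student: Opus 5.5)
The inequality $A_0^l(T)\ge A^l(T)$ was already obtained in the introduction by projecting with $\Pi$. So the plan concerns only the reverse inequality \eqref{reverse1}: given $\varepsilon>0$, produce a smoothly immersed oriented $(l+1)$-submanifold $\Gamma_0\subset\Omega\times\R$ with $\partial\Gamma_0=T$ and $|\Gamma_0|<\mathbf{M}(\Gamma)+\varepsilon$.

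\textbf{Step 1: excise the singular set.} Fix an area minimizing $\Gamma$ with $\partial\Gamma=T$, given by the existence theorem. Apply the Almgren regularity theorem in dimension $l+1$, so that $\dim_{\mathcal H}\mathcal S\le l-1$ for $\mathcal S=\operatorname{sing}\Gamma\cap\Omega$. Near the smooth boundary $T$ one also needs boundary regularity (Allard/Hardt--Simon type), or else one works away from a collar of $T$ and uses the given smooth $M$ there; I would handle the collar by gluing in a thin piece of $M$ pushed into the $\R$-direction. Since $\mathcal H^{l}(\mathcal S)=0$, cover the compact part of $\mathcal S$ by finitely many balls $B_{r_i}(x_i)$ with $\sum r_i^{l}<\delta$. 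By monotonicity, and an upper density bound coming from comparison with cones, $\mathbf M(\Gamma\lfloor B_{r_i})\le C r_i^{l+1}$. Hence the union $E'$ of the balls satisfies $\mathbf M(\Gamma\lfloor E')\le C\delta$. Using the slicing lemma with $f=\operatorname{dist}(\cdot,x_i)$, choose radii in $[r_i,2r_i]$ such that the slices $\partial(\Gamma\lfloor E')$ are smooth closed $l$-manifolds (possibly with multiplicity) of total mass $\le C\sum r_i^{l}\le C\delta$. Here one must arrange that the slices avoid $\mathcal S$; this is generically possible because $\mathcal S$ has small dimension. Now $B=\Gamma\setminus E'$ is a smooth oriented manifold, taken with multiplicity and so as an immersed manifold, with boundary $T\cup\Sigma$, where $\Sigma$ is the union of the cut spheres.

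\textbf{Step 2: cap off $\Sigma$ cheaply.} $\Sigma$ is null-cobordant, since it bounds $-B\cup M$ glued along $T$, i.e.\ the smooth manifold $B\cup_T(-M)$ after smoothing the corner. The issue is that this filling is large. The plan is to shrink it: apply a dilation by a tiny factor $\lambda$ composed with an inversion centered near each piece of $\Sigma$, placing a scaled copy $\widetilde B$ of the filling inside a tiny region, shifted in the $\R$ direction so that no intersections are forced. Its area is $O(\lambda^{l+1})$. Then join $\Sigma$ to its scaled image $\widetilde\Sigma\subset\partial\widetilde B$ by the straight-line homotopy cone $C$ in $\Omega\times\R$, parametrized as $\Sigma\times[0,1]$. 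Its area is bounded by $\mathbf M(\Sigma)\cdot(\text{length of segments})\lesssim\delta\cdot\operatorname{diam}$, so it is small. Using the extra $\R$ coordinate with a monotone height function makes $C$ an immersion.

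\textbf{Step 3: assemble and smooth.} Set $\Gamma_0=B\cup C\cup\widetilde B$. After rounding the corners along $\Sigma$ and $\widetilde\Sigma$ by standard collar smoothing, with arbitrarily small area change, it is a smoothly immersed oriented manifold with boundary $T$ and area $\le\mathbf M(\Gamma)+C\delta+O(\lambda^{l+1})<\mathbf M(\Gamma)+\varepsilon$.

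\textbf{Main obstacle.} I expect the hardest part to be Step 1. It requires choosing the cut so that $\Sigma$ is genuinely smooth with controlled mass despite $\mathcal S$ possibly accumulating, and handling higher multiplicity sheets and the boundary region near $T$. Orientation compatibility in the gluing of Step 2 also needs care.
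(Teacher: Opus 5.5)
Your plan is essentially the paper's proof: excise a neighborhood of $\mathcal S$ whose boundary $\Sigma$ has small mass, shrink the filling of $\Sigma$ obtained by gluing the regular part of $\Gamma$ to $M_0$ along $T$, join $\Sigma$ to the shrunken copy by a cone tilted in the $\R$-direction so that it is immersed, and round the corners. Your finite cover with $\sum r_i^{l}<\delta$ (from $\mathcal H^{l}(\mathcal S)=0$) plus slicing around each center is a more careful version of the paper's excision lemma, and the paper also tacitly assumes the regularity of $\Gamma$ up to $T$ that you flag (your fallback of gluing in a collar of $M$ does not remove this need, since the slices of $\Gamma$ and of $M$ near $T$ would still have to be joined cheaply). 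In Step 2, use one homothety about a single point of $\Omega$ for all of $\Sigma$ (the paper uses one inversion about a center far from the filling): an inversion centered near $\Sigma$ magnifies the nearby parts of the filling, and separate maps for separate pieces of $\Sigma$ cannot be applied to a filling that is one manifold bounded by all of $\Sigma$.
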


\begin{remark}  It is necessary to assume that $T$ is the boundary of some smooth manifold $M_0$, because Thom's Cobordism Theorem \cite{Thom} implies that even if $T$ were an $l$-dimensional smooth oriented manifold without boundary, it may not bound any $(l+1)$-dimensional smooth manifold. Assuming $T= \partial M_0$ ensures that $A_0^l(T)$ is not the infimum of an empty set. $M_0$ will also be crucial for the proof, because we will use it in our cut-and-paste argument.
\end{remark}

We start by applying Almgren's regularity theorem to an area-minimizing current. Let $\Gamma$ be an $(l+1)$-integer rectifiable area-minimizing current with $\partial \Gamma = T$ that realizes the infimum in the definition of $A^l(T)$. By Almgren \cite{Almgren}, 
the singular set $\mathcal{S}:= \text{sing }\Gamma$ has  Hausdorff dimension  at most $(l-1)$
{(shown in Figure 1 below)}.
{In the following lemma, we will show that there is a small $\varepsilon$-tubular neighborhood of $\mathcal{S}$ 
(shown in Figure 2 as $E$) that has both small volume and small surface area.}

\begin{lemma} Let $T, \; \Gamma, \; \mathcal{S}$ be as above. Given $\varepsilon>0$, set 
\[E_{\varepsilon} = \big \{x \in \Gamma\  \big| \ 0< \text{dist}(x,\mathcal{S})<\varepsilon \big\}, \]
\[ \partial E_\varepsilon = \big\{x \in \Gamma\ \big| \  \text{dist}(x,\mathcal{S})=\varepsilon \big\}.\]
Then there exists an $\varepsilon'\in (\frac{\varepsilon}2, \varepsilon))$ such that
\[ \mathcal{H}^{l+1}(E_{\varepsilon'}) \le C \varepsilon^2, \]
and
\[ \mathcal{H}^{l}  (\partial E_{\varepsilon'}) \le C \varepsilon, \]
where $C = C(\Gamma,l, N)>0$ depends on $\Gamma$, $l$, and $N$. 
\end{lemma}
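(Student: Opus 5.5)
Since $\mathcal{S}$ has Hausdorff dimension at most $l-1<l$, its $\mathcal{H}^{l}$-measure vanishes. More usefully, the plan is to cover $\mathcal{S}$ (restricted to a compact region away from $T=\partial\Gamma$, where the singular set lives) by finitely many balls $B_{r_i}(x_i)$ with $x_i\in\mathcal{S}$ and $\sum_i r_i^{l}\le \eta$ for any prescribed $\eta>0$. Compactness is needed to make the cover finite; we take it from the closedness of $\mathcal{S}$ together with the fact that $\Gamma$ is smooth near the smooth boundary $T$ (boundary regularity), so $\mathcal{S}$ stays a positive distance from $T$. We choose the cover at scale $\varepsilon$, i.e.\ all $r_i\le\varepsilon$, via a Vitali-type covering by balls $B_{\varepsilon}(x_i)$ whose number $K(\varepsilon)$ satisfies $K(\varepsilon)\varepsilon^{l}\le C$. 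This Minkowski-type bound is the main obstacle. Hausdorff dimension alone gives arbitrary radii, not uniform radius $\varepsilon$, and for the statement as written we need a box-counting bound $K(\varepsilon)\lesssim \varepsilon^{-(l-1)-\delta}$ with $\delta<1$. I would first try to deduce it from Almgren's stratification/rectifiability estimates (Naber--Valtorta type Minkowski bounds for the singular set, which give finite $(l-1)$-dimensional Minkowski content). Failing that, I would weaken the claim to ``$\mathcal{H}^{l+1}(E_{\varepsilon'})\to0$ and $\varepsilon'\mathcal{H}^l(\partial E_{\varepsilon'})\to 0$'', which is all that is used later and which follows from Hausdorff dimension alone.

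Granting $K(\varepsilon)\le C\varepsilon^{-(l-1)}$, the set $E_\varepsilon\subset\Gamma\cap\bigcup_i B_{2\varepsilon}(x_i)$. By the monotonicity formula (applied at $x_i\in\operatorname{spt}\Gamma$ up to a fixed radius $R_0<\operatorname{dist}(\mathcal{S},T)$), we get $\mathbf{M}(\Gamma\lfloor B_{2\varepsilon}(x_i))\le \mathbf{M}(\Gamma)\,(2\varepsilon/R_0)^{l+1}$. Summing gives
\[
\mathcal{H}^{l+1}(E_\varepsilon)\le C\varepsilon^{-(l-1)}\varepsilon^{l+1}=C\varepsilon^{2},
\]
with $C$ depending on $\Gamma,l,N$ (multiplicities are absorbed into $\mathbf{M}$). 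The same bound of course holds for $E_{\varepsilon'}\subset E_\varepsilon$ with $\varepsilon'<\varepsilon$.

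For the boundary estimate we slice by the $1$-Lipschitz function $f(x)=\operatorname{dist}(x,\mathcal{S})$. By the coarea formula (or the slicing lemma with $W$ a neighborhood of $E_\varepsilon$ and $|\nabla^\Gamma f|\le1$),
\[
\int_{\varepsilon/2}^{\varepsilon}\mathcal{H}^{l}\bigl(\{x\in\Gamma: f(x)=t\}\bigr)\,dt\le \mathcal{H}^{l+1}(E_\varepsilon)\le C\varepsilon^{2}.
\]
Hence the set of $t\in(\varepsilon/2,\varepsilon)$ with $\mathcal{H}^l(f^{-1}(t)\cap\Gamma)>4C\varepsilon$ has measure less than $\varepsilon/4$. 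Therefore there is $\varepsilon'\in(\varepsilon/2,\varepsilon)$ with $\mathcal{H}^{l}(\partial E_{\varepsilon'})\le 4C\varepsilon$, while $\mathcal{H}^{l+1}(E_{\varepsilon'})\le C\varepsilon^2$. Here we use that the level set is exactly the slice where $\nabla^\Gamma f\neq0$. On the level set with $\nabla^\Gamma f=0$ we use the coarea formula in the form for $\mathcal{H}^{l}$ of level sets in the Lipschitz setting, which holds for a.e.\ $t$ and is enough since we only need one good $t$; we can also choose $\varepsilon'$ so that $\partial E_{\varepsilon'}$ is a regular level set in the smooth part $\operatorname{reg}\Gamma$ (Sard, after smoothing $f$ slightly) for later cut-and-paste use.
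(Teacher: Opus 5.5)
Your outline is the same as the paper's: slice $\Gamma$ by $f=\operatorname{dist}(\cdot,\mathcal S)$, average over $(\varepsilon/2,\varepsilon)$ to choose $\varepsilon'$, and bound the tube by a scale-$\varepsilon$ cover of $\mathcal S$ plus monotonicity. The slicing/Chebyshev half is fine. The proof is incomplete exactly where you say it is: you grant $K(\varepsilon)\le C\varepsilon^{1-l}$, and this is not a technicality.

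By the lower density bound $\|\Gamma\|(B_\rho(x))\ge\alpha_{l+1}\rho^{l+1}$ at $x\in\operatorname{spt}\Gamma$, take $\varepsilon$-separated points $x_1,\dots,x_K\in\mathcal S$ (away from $T$). The disjoint balls $B_{\varepsilon/2}(x_i)$ carry mass at least $K\alpha_{l+1}(\varepsilon/2)^{l+1}$ inside $E_{\varepsilon/2}\subset E_{\varepsilon'}$, and multiplicities are bounded away from $T$. So the first inequality of the lemma is essentially \emph{equivalent} to an $(l-1)$-dimensional upper Minkowski bound for $\mathcal S$. In particular, $\delta<1$ does not suffice for the statement as written; it only yields $\varepsilon^{2-\delta}$ and $\varepsilon^{1-\delta}$.

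Hausdorff dimension does not control box counting: the compact countable set $\{0\}\cup\{1/\log n\}_{n\ge2}$ has Hausdorff dimension $0$ and upper box dimension $1$. So Almgren's theorem cannot give this bound. The paper does not supply the missing ingredient either. It passes from $\mathcal H^{l-1+\delta}(\mathcal S)=0$ to a cover by balls of the fixed radius $\varepsilon$ with $\sum_j\varepsilon^{l-1+\delta}\le\lambda$, which is exactly this unjustified box-counting claim. It then lets $\delta\to0$ although $\lambda$ depends on $\delta$. It also runs monotonicity up to a radius $R$ with $\Gamma\subset B_R(0)$, ignoring $\operatorname{spt}\partial\Gamma$.

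Your Naber--Valtorta idea is not an off-the-shelf fix. Quantitative stratification controls points whose tangent cones lack symmetry, but in codimension $\ge2$ the singular set contains branch points with planar, higher-multiplicity tangent cones. To my knowledge, no $(l-1)$-dimensional Minkowski bound for the full singular set is available.

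Your fallback does not rescue the later argument. The statements $\mathcal H^{l+1}(E_{\varepsilon'})\to0$ and $\varepsilon'\mathcal H^l(\partial E_{\varepsilon'})\to0$ are trivial: continuity of measure applies since $\bigcap_\varepsilon E_\varepsilon=\emptyset$, and then coarea gives the second. But they are not what is used. Lemma 5.1 bounds the cone joining $\partial E'$ to its inverted copy by $Ch\,\mathcal H^l(\partial E')$ with $h=\sup_{\partial E'}|x|$ of order one, so you need $\mathcal H^l(\partial E')$ itself to be small.

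That \emph{does} follow from Hausdorff dimension if you give up distance tubes, as your opening sentences were heading toward:
\begin{itemize}
\item Since $\mathcal H^l(\mathcal S)=0$, cover (a compact part of) $\mathcal S$ by finitely many balls $B_{r_i}(x_i)$, $x_i\in\mathcal S$, with $\sum_i r_i^l<\eta$.
\item Slicing by $|x-x_i|$ on $(r_i,2r_i)$, together with $\|\Gamma\|(B_{2r_i}(x_i))\le Cr_i^{l+1}$, gives $s_i\in(r_i,2r_i)$ with $\mathcal H^l(\Gamma\cap\partial B_{s_i}(x_i))\le Cr_i^l$.
\item Then $U=\bigcup_i B_{s_i}(x_i)$ satisfies $\mathcal H^l(\Gamma\cap\partial U)\le C\eta$ and $\|\Gamma\|(U)\le C\eta^{1+1/l}$.
\end{itemize}
This is weaker than the lemma (no rates, no distance tube), but it is all Sections 5--6 need. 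It matches the paper's closing remark replacing $\partial E_{\varepsilon'}$ by pieces of finitely many spheres.

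Finally, you use that $\Gamma$ is smooth near $T$ to keep $\mathcal S$ at positive distance from $T$. That is not available in codimension $\ge2$ without extra hypotheses: Hardt--Simon is codimension one, and Allard's boundary theorem needs boundary density $\tfrac12$, e.g.\ a convex barrier. So the boundary needs separate treatment (boundary monotonicity) or an added assumption.
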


From this point forward, $T, \Gamma, M_0$ and $\mathcal{S}$ will mean as above (Figure 1 - Note we can't actually illustrate 
$T, \Gamma, M_0, \mathcal{S}$ as above because of the restrictions on the dimensions: If we draw it for $N=3, l=1$ with $\Gamma$ being a 
$2$-integral rectifiable current, then $\mathcal{S}$ would be empty. Therefore, the sketch should be viewed only to give intuition for the higher dimensional objects).

\begin{figure}[h]
    \centering
    \begin{minipage}{0.45\textwidth}
        \centering
        \includegraphics[width=0.8\textwidth]{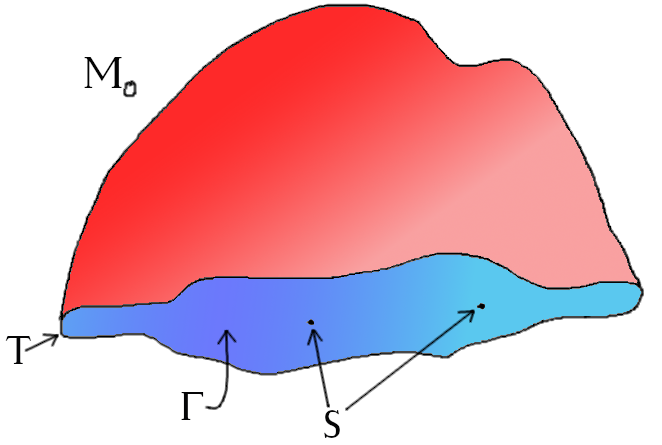} 
        \caption{Sketch for the case $N \geq 4$, $l = N-2$, $\Gamma$ is a $(l+1)$ integer rectifiable current, 
        $\mathcal{S}$ has Hausdorff dimension $(l-2)$.}
    \end{minipage}\hfill
    \begin{minipage}{0.45\textwidth}
        \centering
        \includegraphics[width=0.8\textwidth]{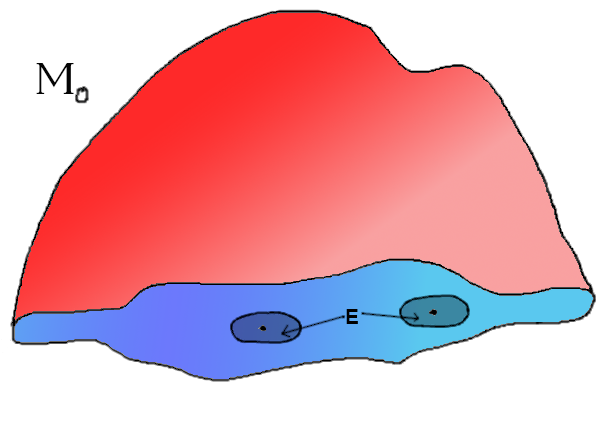} 
        \caption{An $\varepsilon$-tubular neighborhood of $\mathcal{S}$.}
    \end{minipage}
\end{figure}

 We should point out that, one may assume that $\partial E_{\varepsilon'}$ is Lipschitz hypersurface in $\Gamma$. In fact, one may replace 
 $\partial E_{\varepsilon'}$ by a finite union of smooth hypersurfaces in $\Gamma$ with transversal intersections, see 
 the remark at the end of this section.

\begin{proof} We use the slicing Lemma 2.5.
Recall $f = {\rm{dist}}(x,\mathcal{S}):\mathbb{R}^N \to \mathbb{R}$ is a Lipschitz function with $\text{Lip} \; f \leq 1$, and $|\nabla f| =1$ $ L ^N$-a.e. (see \cite[Theorem 3.14]{Evans}). The slice of $T$ by $f$ at $t$,
$\langle T, f, t\rangle$, will be the level set of the distance function $\{{\rm{dist}}(\cdot,\mathcal{S})=t\}$. Applying the slicing Lemma 2.5 
gives
\[ \int_0^\varepsilon \mathbf{M}(\langle \Gamma,f,t \rangle ) \; dt \leq  \mathbf{M}(\Gamma\lfloor E_{\varepsilon}). \]
By the mean value theorem, there exists some $\varepsilon' \in (\frac{\varepsilon}2,\varepsilon)$ such that
\[ \mathbf{M}\big(\langle \Gamma, f, \varepsilon' \rangle \big) \leq \frac{2\mathbf{M}(\Gamma\lfloor E_{\varepsilon})}{\varepsilon} .\]
We need to bound $\mathbf{M}(\Gamma\lfloor E_\varepsilon)$.
First, since $\mathcal{S}$ has  Hausdorff dimension  at most
 $(l-1)$, we know that for any small $\delta>0$, $H^{l-1+\delta}(\mathcal{S})=0$. Then by covering $\mathcal{S}$ 
 with a countable collection of balls with radius $\varepsilon$, it is implied that
\[ \sum_j \varepsilon^{l-1+\delta} \le \lambda \]
for some $\lambda>0$.

Now cover $E_\varepsilon$ with a countable collection of balls $\{B_{r_j}(x_j) \}$ with $r_j \leq \varepsilon$, so
\[ \mathbf{M}(\Gamma\lfloor E_\varepsilon) \leq \sum_j \mathbf{M}(\Gamma \cap B_{r_j}(x_j)) 
\leq \sum_j \mathbf{M}(\Gamma \cap B_\varepsilon(x_j)).\]
We will use the monotonicity Theorem 2.8 to bound $\mathbf{M}(\Gamma \cap B_\varepsilon(x_j))$. 
Let $R>1$ be {{sufficiently large}} such that $\Gamma \subset B_R(0) \subset \mathbb{R}^N$. Then
\[ \frac{\mathbf{M}(\Gamma \cap B_\varepsilon(x_j))}{\alpha_{l+1} \varepsilon^{l+1}} \le
\frac{\mathbf{M}(\Gamma \cap B_R(0))}{\alpha_{l+1}R^{l+1}} 
\le \frac{\mathbf{M} (\Gamma)}{\alpha_{l+1}}. \]
So
\begin{align*}\mathbf{M}(\Gamma\lfloor E_\varepsilon) &\leq \sum_j \mathbf{M}(\Gamma \cap B_\varepsilon(x_j))
\leq  \mathbf{M}(\Gamma) \sum_j \varepsilon^{l+1}\\
&= \mathbf{M}(\Gamma)\varepsilon^{2-\delta} \sum_j \varepsilon^{l-1+\delta} \le
\mathbf{M}(\Gamma)\varepsilon^{2-\delta}\lambda.
\end{align*}
Let $C = \mathbf{M}(\Gamma)\lambda$. Then 
\[ \mathbf{M}(\Gamma\lfloor E_\varepsilon) \le C \varepsilon^{2-\delta} .\]
Then
\[ \mathbf{M}(\langle \Gamma, f, \varepsilon' \rangle ) \leq \frac{2\mathbf{M}(\Gamma\lfloor E_\varepsilon)}{\varepsilon} 
\le C \varepsilon^{1-\delta} .\]
So taking $\delta \to 0$,
\[\textbf{M}(\langle \Gamma, f, \varepsilon' \rangle ) \leq C\varepsilon. \]
Since $E_{\varepsilon'}\setminus \mathcal{S}$ is a subset of the regular set of $\Gamma$, we have 
for $\partial E_{\varepsilon'} =\big\{x \in \Gamma\ \big|\  \text{dist}(x,\mathcal{S})=\varepsilon' \big\}$,
\[ \mathcal{H}^{l}  (\partial E_{\varepsilon'}) = \mathbf{M}(\langle \Gamma, f, \varepsilon' \rangle ) \leq C\varepsilon .\]
We also have from above
\[  \mathcal{H}^{l+1} \; (E_{\varepsilon'}) \le\mathbf{M}(\Gamma\lfloor E_{\varepsilon}) \le C \varepsilon^{2-\delta}, \]
taking $\delta \to 0$
\[ \mathcal{H}^{l+1}(E_{\varepsilon'}) \le C \varepsilon^2.\]


Note that $\partial E_{\varepsilon'}$ can be assumed to be a Lipshitz hypersurface in $\Gamma$, and hence compact ($\mathcal{S}$ is compact too) and can also be covered by finitely many small balls in the above described covering. Moreover we may assume 
$\partial E_{\varepsilon'}$ intersects with $\Gamma$ smoothly, for otherwise we replace 
$\partial E_{\varepsilon'}$ by a finite union of boundaries of these finitely small balls as said above on the out side 
so that it contains $E_{\varepsilon'}$.
\end{proof}

\section{Applying Spherical Inversion}

Given the sphere centered at the origin $0$ with radius $\varepsilon$, we define {$f: \mathbb{R}^{N+1} \to \mathbb{R}^{N+1}$ by
\[ f(x) = \frac{\varepsilon^2}{|x|^2}x, \  \  x\in \R^{N+1}.\]}
We call this function spherical inversion with respect to the sphere {$\mathbb{S}_{\varepsilon}^{N}$} centered at the origin $0$ with radius $\varepsilon$, and all spherical inversions in this section will be with respect to the same sphere.
The inversion of a sphere not intersecting the origin will be another sphere with a different center and radius 
(see \cite[Section 6.9]{Coxeter}). Therefore, the inversion of a ball not intersecting the origin will be another ball.

We need to know how diameters of spheres transform under spherical inversion.
\begin{lemma} Given a sphere of radius $r$ and center {$c \in \mathbb{R}^{N+1}$} (denoted $S_r(c)$) not intersecting the origin, the image of $S_r(c)$ under spherical inversion, $S'_{r'}(c')$, will have center $c' = \frac{\varepsilon^2}{|c|^2}c$ and radius $r' = \frac{\varepsilon^2 r}{|c|^2 - r^2}$.
\end{lemma}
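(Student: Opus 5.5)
The approach is to exploit the rotational symmetry of the configuration about the line through the origin and $c$, and so reduce everything to a computation along that single ray. Set $e = c/|c|$. Since $S_r(c)$ does not meet the origin, we have $|c|>r$. The inversion $f(x)=\varepsilon^2 x/|x|^2$ commutes with every orthogonal map of $\mathbb{R}^{N+1}$ that fixes $e$. Moreover, as recalled above from \cite[Section 6.9]{Coxeter}, $f$ maps spheres not through $0$ to spheres. Hence $f(S_r(c))$ is a sphere $S'_{r'}(c')$ that is invariant under all rotations fixing the axis $\mathbb{R}e$. This forces its center $c'$ to lie on that axis, and on the same side of the origin as $c$, because $f$ preserves rays from $0$.

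The first step is to identify the two points where the axis meets $S_r(c)$, namely $p_\pm=(|c|\pm r)e$. Their images are $f(p_\pm)=\frac{\varepsilon^2}{|c|\pm r}\,e$. These are exactly the points where the axis meets the image sphere, so they are antipodal on $S'_{r'}(c')$. The radius is therefore half their distance:
\[
r'=\frac12\Big(\frac{\varepsilon^2}{|c|-r}-\frac{\varepsilon^2}{|c|+r}\Big)=\frac{\varepsilon^2 r}{|c|^2-r^2},
\]
which is the stated formula.

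For the center, I will use the point the statement designates, $c'=f(c)=\frac{\varepsilon^2}{|c|^2}c$, which is the inverted image of the center $c$. The symmetry argument above already places the center of the image sphere on the ray $\mathbb{R}_+e$, the same ray as $f(c)$. I will then check that $f(c)$ lies inside the image ball, between $f(p_+)$ and $f(p_-)$, since $|c|-r<|c|<|c|+r$. I will also check that its distance to the midpoint $\frac{\varepsilon^2|c|}{|c|^2-r^2}e$ of $f(p_+)$ and $f(p_-)$ is
\[
\frac{\varepsilon^2 r^2}{|c|(|c|^2-r^2)}=r'\,\frac{r}{|c|}.
\]
This is what the later sections actually use: the image of $B_r(c)$ is a ball of radius $r'$, located at $f(c)$ up to an error that is a small fraction $r/|c|$ of its own radius.

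The main obstacle is to state precisely in what sense $c'=f(c)$ serves as the center. The point $f(c)$ coincides with the exact midpoint of the image sphere only to leading order in $r/|c|$. If the subsequent argument needs balls exactly centered at $f(c)$, I would instead use the inclusion $f(B_r(c))\subset B_{r'(1+r/|c|)}(f(c))$, which follows from the triangle inequality together with the displacement bound above.
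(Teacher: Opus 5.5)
Your derivation of $r'$ is the same as the paper's. The paper takes the two points $x_1,x_2$ where the ray from $0$ through $c$ meets $S_r(c)$ and sets $r'=\frac12|f(x_1)-f(x_2)|$; its displayed intermediate expression drops the $\frac12$, but its final formula matches yours. It then simplifies using $|x_1|=|c|-r$ and $|x_2|=|c|+r$. Your rotational-symmetry argument supplies what the paper leaves unsaid, namely why $f(x_1)$ and $f(x_2)$ are antipodal on the image sphere.

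On the center, the paper's entire argument is the sentence ``Clearly, $c'=f(c)=\frac{\varepsilon^2}{|c|^2}c$,'' and that assertion is false. Your own computation already shows this. The center of $f(S_r(c))$ is the midpoint of $f(p_+)$ and $f(p_-)$, which is $\frac{\varepsilon^2}{|c|^2-r^2}\,c$. This differs from $f(c)$ by $r'\,r/|c|>0$ whenever $r>0$. For instance, take $\varepsilon=1$, $c=2e$, $r=1$. The image sphere meets the axis at $\frac13 e$ and $e$, so its center is $\frac23 e$, whereas $f(c)=\frac12 e$.

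So do not present this as an ``obstacle'' about the sense in which $f(c)$ serves as the center, and do not say the claim holds ``to leading order.'' Say plainly that the center part of the statement is false and cannot be proved. The correct lemma is $c'=\frac{\varepsilon^2}{|c|^2-r^2}c$ with $r'=\frac{\varepsilon^2 r}{|c|^2-r^2}$ as stated.

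The error does no harm downstream. Lemma 4.3 uses only the radius, to bound $\text{diam}\, f(C_i)$ by a multiple of $\varepsilon^2 r_i$, and never the location of the center. Your fallback inclusion $f(\overline{B}_r(c))\subset \overline{B}_{r'(1+r/|c|)}(f(c))$ is correct, but it is not needed.
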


\begin{proof} Clearly, $c' = f(c) = \frac{\varepsilon^2}{|c|^2}c$.
To find the new radius, consider the closest and farthest points on the sphere from the origin, i.e. the points where the ray $\overrightarrow{0c}$ intersect the sphere $S_r(c)$, $x_1$ and  $x_2$.\\
\[ r' = \frac{1}{2}|f(x_1) - f(x_2)| = \big| \frac{\varepsilon^2}{|x_1|^2}x_1 - \frac{\varepsilon^2}{|x_2|^2}x_2  \big| \]
Using $x_2 = x_1 + 2rx_1/|x_1|$ {and $|x_1|=|c|-r$}, and further simplifications we get
\[ r' = \frac{\varepsilon^2 r}{|x_1|(2r+|x_1|)} = \frac{\varepsilon^2 r}{|c|^2 - r^2} \;  \]
{This completes the proof}. 
\end{proof}

We also  recall the following  theorem, due to Jung \cite{Jung}.

\begin{theorem}[Jung's Theorem] \textit{Given a compact {$K \subset \mathbb{R}^{N+1}$, with $d = \text{diam}(K)$}, there exists a closed ball with radius 
{$r \leq d \sqrt{\frac{N+1}{2(N+2)}}$} that contains $K$}.
\end{theorem}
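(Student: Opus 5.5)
The plan is to reduce Jung's Theorem to a statement about the minimal enclosing ball and then to run the classical extremal argument. Since $K$ is compact, the function $c\mapsto \max_{y\in K}|y-c|$ is continuous and coercive on $\mathbb{R}^{N+1}$. It therefore attains its minimum $r$ at some center $c$, and $K\subset \overline{B_r(c)}$. We may translate so that $c=0$. It then suffices to show $r\le d\sqrt{\tfrac{N+1}{2(N+2)}}$ for this particular ball. Write $n=N+1$ for the ambient dimension, so the target bound is $r\le d\sqrt{n/(2(n+1))}$.

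\textbf{Step 1.} Let $F=K\cap \partial B_r(0)$ be the set of contact points. I would show that $0$ lies in the convex hull of $F$. If not, separate $0$ from the compact convex set $\mathrm{conv}(F)$ by a unit vector $u$ with $\langle y,u\rangle\ge \eta>0$ for all $y\in F$. Moving the center slightly to $tu$ strictly decreases $|y-tu|$ for every $y$ near $F$. Points of $K$ away from $F$ have $|y|\le r-\delta$ by compactness, so they stay inside a ball of radius less than $r$ for small $t$. This contradicts minimality of $r$. By Carathéodory's theorem we may then choose $y_1,\dots,y_m\in F$ with $m\le n+1$ and weights $\lambda_i\ge0$, $\sum\lambda_i=1$, such that $\sum_i\lambda_i y_i=0$.

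\textbf{Step 2.} This is the computation. Since $|y_i|=r$, we have $|y_i-y_j|^2=2r^2-2\langle y_i,y_j\rangle$. Fix $j$ and sum against $\lambda_i$, using $\sum_i\lambda_i\langle y_i,y_j\rangle=0$:
\[
\sum_{i\ne j}\lambda_i|y_i-y_j|^2=\sum_i\lambda_i|y_i-y_j|^2=2r^2 .
\]
The left side is at most $(1-\lambda_j)d^2$, so $2r^2\le(1-\lambda_j)d^2$ for each $j$. Summing over $j=1,\dots,m$ gives $2mr^2\le (m-1)d^2$, that is, $r^2\le \frac{m-1}{2m}d^2\le \frac{n}{2(n+1)}d^2$. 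The last inequality holds because $m\mapsto (m-1)/m$ is increasing and $m\le n+1$. With $n=N+1$ this is exactly the claimed bound.

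The main obstacle is Step 1: making the perturbation argument rigorous near points of $F$, where the decrease of $|y-tu|$ must be uniform. I would handle it by expanding $|y-tu|^2=|y|^2-2t\langle y,u\rangle+t^2$. By continuity, $\langle y,u\rangle\ge\eta/2$ on a neighborhood of $F$ in $K$, and on this neighborhood $|y-tu|^2\le r^2-t\eta+t^2<r^2$ for small $t>0$. The degenerate case $d=0$, where $K$ is a single point, is trivial.
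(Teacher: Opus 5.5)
The paper gives no proof of this statement. It quotes Jung's theorem from Jung's original paper and uses it only as a tool in the covering argument of Lemma 4.3.

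Your argument is a correct, self-contained proof, and it is the standard modern one:
\begin{itemize}
\item existence of a minimal enclosing ball;
\item the fact that the contact set $F=K\cap\partial B_r(0)$ surrounds the center;
\item Carath\'eodory's theorem to reduce to $m\le n+1$ contact points;
\item the averaging identity $\sum_i\lambda_i|y_i-y_j|^2=2r^2$, which after summing over $j$ gives $r^2\le\frac{m-1}{2m}d^2\le\frac{N+1}{2(N+2)}d^2$.
\end{itemize}
Your perturbation step is made uniform exactly as it should be. You split $K$ into the relatively open set $U=\{y\in K:\langle y,u\rangle>\eta/2\}$, where $|y-tu|^2\le r^2-t(\eta-t)$, and the compact remainder $K\setminus U$, where $|y|\le r-\delta$.

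Compared with the bare citation, your route gives completeness and the slightly finer intermediate bound in terms of $m$; the citation gives brevity.

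One remark on how the result is used in the paper. In Lemma 4.3 the covering sets $C_i$ need not be compact, so one should apply your statement to $\overline{C_i}$, which has the same diameter. Also, the sharp constant plays no role there. Any enclosing ball of radius comparable to $\mathrm{diam}(C_i)$, for example $\overline{B}_{\mathrm{diam}(C_i)}(x)$ with $x\in C_i$, yields the same conclusion up to the constant $C(l,N)$.
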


We would like to apply spherical inversion to the set $A = M_0 \cup T \cup \Gamma \setminus (E \cup \mathcal{S})$ (see Figure 3 below),
where $E=E_{\varepsilon'}$ is the $\varepsilon'$-tubular neighborhood of 
the singular set $\mathcal{S}$, and show that the resulting image (see Figure 4 below) has 
small $(l+1)$-dimensional Hausdorff measure.

\begin{figure}[b]
    \centering
    \begin{minipage}{0.45\textwidth}
        \centering
        \includegraphics[width=0.9\textwidth]{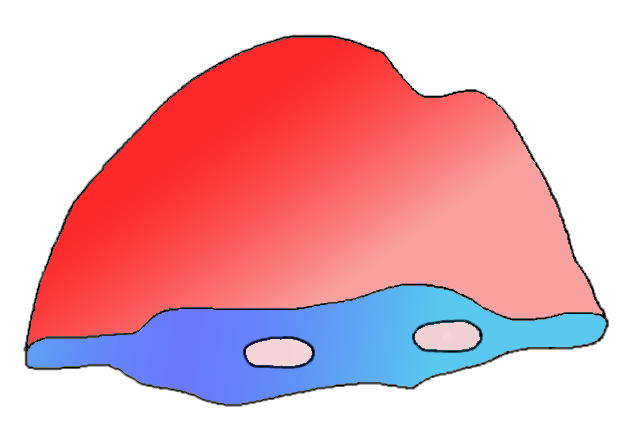} 
        \caption{Removing the tubular neighborhood of the singular set results in $A = M_0 \cup T \cup \Gamma \setminus (E \cup S)$}
    \end{minipage}\hfill
    \begin{minipage}{0.45\textwidth}
        \centering
        \includegraphics[width=0.9\textwidth]{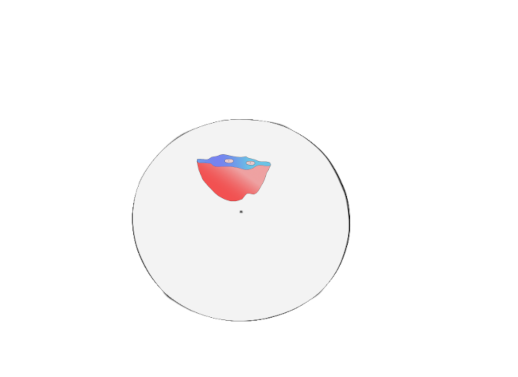} 
        \caption{A sketch of the image of $A$ after spherical inversion - not to scale}
    \end{minipage}
\end{figure}

\begin{lemma} Given $\varepsilon>0$, $A = M_0 \cup T \cup \Gamma \setminus (E' \cup \mathcal{S})$, where $E'=E_{\varepsilon'}$ is given 
by Lemma 3.3, and {let $f:\mathbb{R}^{N+1} \to \mathbb{R}^{N+1}$ }{be the spherical inversion} defined by $f(x) = \frac{\varepsilon^2}{|x|^2}x$, then
\[ \mathcal{H}^{l+1}(f(A)) \leq C \varepsilon^{2l+2} \mathcal{H}^{l+1}(A),\]
where $C = C(l,N)>0$.
\end{lemma}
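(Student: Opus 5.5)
The plan is to use the fact that spherical inversion is a conformal map whose stretching factor is controlled by the distance to the center, and then to integrate this pointwise bound over $A$. First I fix the normalization that makes the inequality meaningful. The center $0$ of the inversion sphere $\mathbb{S}^N_\varepsilon$ must be placed away from $A$. Since $A\subset \overline{\Omega}\times\{0\}\subset\mathbb{R}^{N+1}$ is compact, I translate coordinates (using the extra $\mathbb{R}$-direction, or any direction) so that
\[
\operatorname{dist}(0,A)\ge 1 .
\]
This should be part of the setup of the statement. Without such a lower bound on $|x|$ over $A$, no estimate with a constant depending only on $(l,N)$ can hold.

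\emph{Step 1: differential of $f$.} For $x\ne 0$,
\[
Df(x)=\frac{\varepsilon^2}{|x|^2}\Big(I-2\frac{x\otimes x}{|x|^2}\Big).
\]
The matrix in parentheses is a reflection, hence orthogonal. So $f$ is conformal with linear stretching factor $\varepsilon^2/|x|^2$. In particular, for any $(l+1)$-plane $P$, the $(l+1)$-Jacobian of $f$ restricted to $P$ equals $(\varepsilon^2/|x|^2)^{l+1}\le \varepsilon^{2l+2}$ when $|x|\ge 1$.

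\emph{Step 2: area formula.} The set $A$ is countably $(l+1)$-rectifiable. It is made of the smooth manifold $M_0$, the regular part of $\Gamma$ away from $E'\cup\mathcal{S}$, and the $l$-dimensional set $T$, which is $\mathcal{H}^{l+1}$-null. Moreover $f$ is smooth, hence locally Lipschitz, on a neighborhood of $A$ not containing $0$. Applying the area formula, and using that multiplicities are at most those coming from $A$,
\[
\mathcal{H}^{l+1}(f(A))\le \int_A J^A_{l+1}f\,d\mathcal{H}^{l+1}
=\int_A\Big(\frac{\varepsilon^2}{|x|^2}\Big)^{l+1}d\mathcal{H}^{l+1}\le \varepsilon^{2l+2}\mathcal{H}^{l+1}(A).
\]
This argument gives $C=1$.

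\emph{Alternative route via Lemma 4.1 and Jung's Theorem.} If one prefers to avoid rectifiability of $A$ (for example, to stay with the elementary tools the section introduces), I would argue directly from the definition of Hausdorff measure. Fix a $\delta$-cover of $A$ by sets $U_j$ of diameter $d_j$, with $\delta<1/2$. By Jung's Theorem, each $U_j$ lies in a closed ball of radius $r_j\le c_N d_j$ centered at some $c_j$. We may assume each ball meets $A$, so $|c_j|-r_j\ge 1-2r_j>0$ and the ball does not contain the origin. By Lemma 4.1 the image ball has radius
\[
r_j'=\frac{\varepsilon^2 r_j}{|c_j|^2-r_j^2}=\frac{\varepsilon^2 r_j}{(|c_j|-r_j)(|c_j|+r_j)}\le \frac{\varepsilon^2 r_j}{(1-2\delta)^2}.
\]
Hence $f(U_j)$ has diameter at most $2C'\varepsilon^2 c_N d_j$. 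Summing $d_j^{\,l+1}$ over the cover, taking the infimum over covers and letting $\delta\to0$ gives the bound with $C=C(l,N)$, coming from Jung's constant and the normalization constants of $\mathcal{H}^{l+1}$.

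\emph{Main obstacle.} The computation itself is routine. The real issue is the geometric normalization: guaranteeing $|x|\ge1$ on $A$ (or, with the covering route, that covering balls stay uniformly away from $0$). This is where the choice of the inversion center relative to $\Omega\times\mathbb{R}$ must be made explicit. I would state it as a standing choice before the proof, namely the center at distance at least $1$ from $\overline{\Omega}\times\{0\}$, and then carry out Step 1 and Step 2.
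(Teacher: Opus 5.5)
Your proposal is correct, and your main argument takes a different route from the paper. The paper's proof is a covering argument. It translates so that $\operatorname{dist}(A,0)>2$ and encloses each set of a $\delta$-cover in a Jung ball. It then uses Lemma 4.1 to bound the diameter of the inverted ball by a multiple of $\varepsilon^2 r_i$ and passes to the limit in the definition of $\mathcal{H}^{l+1}$. This is exactly your ``alternative route'', and there you are a bit more careful than the paper in checking that the balls stay away from the origin.

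Your main route instead computes $Df(x)=\frac{\varepsilon^2}{|x|^2}\bigl(I-2\frac{x\otimes x}{|x|^2}\bigr)$. It reads off that $f$ is conformal with factor $\varepsilon^2/|x|^2$, and then applies the area formula on the countably $(l+1)$-rectifiable set $A$. This gives the sharp constant $C=1$. In fact, since $f$ is injective, it gives the exact identity $\mathcal{H}^{l+1}(f(A))=\int_A(\varepsilon/|x|)^{2l+2}\,d\mathcal{H}^{l+1}$. The price is that you need rectifiability of $A$ and the area formula. The paper's covering argument works for arbitrary sets but loses Jung's constant.

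Both routes can be bypassed. The identity $|f(x)-f(y)|=\varepsilon^2|x-y|/(|x|\,|y|)$ makes $f$ globally $\varepsilon^2$-Lipschitz on $\{|x|\ge 1\}$. That yields $C=1$ for any set at distance at least $1$ from the center, with no rectifiability and no Jung's theorem.

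One small inaccuracy: $A$ is not contained in $\overline{\Omega}\times\{0\}$, since $M_0$ (and later $\overline{M}_0$) lies in $\Omega\times\mathbb{R}$. All you need is that $A$ is bounded, so that the center can be placed at distance at least $1$ from it.
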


\begin{proof} First, we translate the entire set $A$ so that $\text{dist} (A, O)>2$.
This will ensure that none of the balls we deal with intersect the origin.

Fix small $\delta>0$, and choose a countable collection of {sets $\{C_i \} \subset \mathbb{R}^{N+1}$} such that $\text{diam}(C_i) \leq \delta$ and  $A \subset \cup_i C_i$. By Jung's Theorem, each $C_i$ is contained in some closed ball $\overline{B}_{r_i}$ {with $r_i \leq \text{diam}(C_i)\sqrt{\frac{N+1}{2(N+2)}}$. Let $\gamma = \sqrt{\frac{N+1}{2(N+2)}}$.} So using Lemma 4.1 on each such ball,

\[ \text{diam}(f(C_i)) \leq \text{diam}(f(\overline{B}_{r_i})) = \frac{\varepsilon^2 r_i}{|c|^2 - r_i^2} \leq  \varepsilon^2 r_i \leq \text{diam}(C_i)\gamma \varepsilon^2 \leq \delta \varepsilon^2 \gamma . \]
Then, $A \subset \cup_i C_i$ implies
\[ \mathcal{H}^{l+1}_{\delta \varepsilon^2 \gamma} (f(A)) \leq \sum_i \alpha_{l+1} \big(\frac{\text{diam}(f(C_i))}{2} \big)^{l+1} 
\leq (\varepsilon^2 \gamma)^{l+1} \sum_i \alpha_{l+1}\big( \frac{\text{diam}(C_i) }{2} \big)^{l+1}. \]
Taking the infimum over all coverings $\{ C_i \}$ of $A$ with $\text{diam}(C_i) \leq \delta$ we get
\[\mathcal{H}^{l+1}_{\delta \varepsilon^2 \gamma} (f(A)) \leq (\varepsilon^2 \gamma)^{l+1} \mathcal{H}^{l+1}_\delta(A), \]
and taking $\delta \to 0$ we get
\[ \mathcal{H}^{l+1}(f(A)) \leq (\varepsilon^2 \gamma)^{l+1} \mathcal{H}^{l+1}(A). \]
Thus, we can make the measure of the image $f(A)$ as small as we like by choosing our inversion sphere to have very small radius $\varepsilon$.
\end{proof}

\section{Attaching with cones}
Given $\varepsilon,r >0$, choose $\varepsilon'\in (\frac{\varepsilon}2, \varepsilon)$, $E'=E_{\varepsilon'}$ and $\partial E'=\partial E_{\varepsilon'}$
as in the proof of Lemma 3.3, so $ \partial E' = \big\{x \in \Gamma \  \big|\    \text{dist}(x,\mathcal{S})=\varepsilon' \big\}.$ We consider the holes of 
$\Gamma \setminus \big(E' \cup \mathcal{S}\big)$, and the holes of $f\big(\Gamma \setminus (E' \cup \mathcal{S})\big)$, where $f$ represents spherical inversion through the sphere 
{$\mathbb{S}^{N}_r(0)\subset\R^{N+1}$}. We would like to connect them with conical sets (see Figure 5 below).

Define for each $t \in [0,1]$, the sets $ V_t = \big\{ tx: x \in \partial E' \big\},$ and define the cone

\[V = \Big\{ tx: x \in \partial E', t \in [0,1] \Big\} = \bigcup_{t \in [0,1]} V_t. \]
Also, define for each $x \in \partial E'$ (noting that $f(x)=\frac{r^2}{|x|^2}x$), 
\[ C_x = \Big\{ tx: t \in \big[ \frac{r^2}{|x|^2}, 1 \big] \Big\}, \]
and the truncated cone
\[ C = \bigcup_{x \in \partial E'} C_x .\]
Clearly, the geometric boundary of $C$ is $\partial E' \cup f(\partial E')$. 
The following  Lemma asserts  that the $(l+1)$-dimensional 
area of the truncated cone $C$ is small.

\begin{lemma} Given $\varepsilon, r>0$, choose $E', \partial E'$,
 $f, V,$ and $C$  as above, then
\[ \mathcal{H}^{l+1}(C) < C\varepsilon,\]
where $C = C(\Gamma, l, N)>0$ {depends on $\Gamma, l, N$}.
\end{lemma}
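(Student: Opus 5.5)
We bound the area of the truncated cone $C$ by the area of the full cone $V$ over $\partial E'$ with vertex at the origin, and then bound the latter by the cone-area formula together with Lemma 3.3. Since $\frac{r^2}{|x|^2}\ge 0$, each segment $C_x$ is contained in the full ray segment $\{tx: t\in[0,1]\}$, so $C\subset V$ and $\mathcal{H}^{l+1}(C)\le \mathcal{H}^{l+1}(V)$. It therefore suffices to estimate the area of $V$.

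For this we use the map $\Phi:\partial E'\times[0,1]\to\R^{N+1}$, $\Phi(x,t)=tx$. By the remark after Lemma 3.3, $\partial E'$ may be taken to be a Lipschitz (indeed piecewise smooth) $l$-dimensional set, so the area formula applies. At a point $(x,t)$, with an orthonormal frame $e_1,\dots,e_l$ of the approximate tangent space of $\partial E'$ at $x$, the differential sends $e_i\mapsto t e_i$ and $\partial_t\mapsto x$. The Jacobian is then
\[
J\Phi(x,t)=t^l\,\big|x^{\perp}\big|\le t^l|x|,
\]
where $x^\perp$ is the component of $x$ orthogonal to $T_x\partial E'$. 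Integrating gives
\[
\mathcal{H}^{l+1}(V)\le\int_{\partial E'}\int_0^1 t^l|x|\,dt\,d\mathcal{H}^l(x)\le \frac{\sup_{\partial E'}|x|}{l+1}\,\mathcal{H}^l(\partial E').
\]
This is the standard cone inequality $\mathcal{H}^{l+1}(0\times\!\!\!\!\times\, K)\le \frac{R}{l+1}\mathcal{H}^l(K)$ for $K\subset B_R$. One could instead get the same bound by a covering argument mimicking Lemma 4.3, covering $\partial E'$ by small sets and taking cones over them.

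It remains to bound the two factors. Lemma 3.3 gives $\mathcal{H}^l(\partial E')\le C\varepsilon$. For $\sup_{\partial E'}|x|$, note that $\partial E'\subset\Gamma$, and in Lemma 4.3 the set was translated so that it lies at distance greater than $2$ from the origin. After this translation $\Gamma$ is still bounded, say $\Gamma\subset B_{R'}(0)$, where $R'$ depends only on $\Gamma$ (through the fixed translation and $\operatorname{diam}\Gamma$). Combining, $\mathcal{H}^{l+1}(C)\le \frac{R'}{l+1}C\varepsilon$, which is the claim with a constant depending on $\Gamma,l,N$ and independent of $r$.

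The main obstacle is justifying the area formula on $\partial E'$, which is only known to be a level set of the distance function. The remedy is the one indicated in the paper: replace $\partial E'$ by a finite union of smooth hypersurface pieces of $\Gamma$ meeting transversally, or argue via rectifiability of the slice. A second point to check is the sign/orientation bookkeeping: one must confirm that the vertex at $0$ lies outside all of these sets, which the translation ensures, so that $t\mapsto tx$ is injective away from $t=0$ and area is not undercounted.
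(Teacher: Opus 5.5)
Your proposal is correct and follows the paper's argument. You use $C\subset V$, bound the cone over $\partial E'$ by a constant times $h\,\mathcal{H}^l(\partial E')$ with $h=\sup_{\partial E'}|x|\le\sup_\Gamma|x|$, and conclude with Lemma 3.3; your Jacobian computation makes the paper's ``cylinder of height $h$'' comparison explicit and gives the sharp factor $\frac{1}{l+1}$. Your final worry about injectivity is unnecessary, since $\mathcal{H}^{l+1}(\Phi(A))\le\int_A J\Phi$ holds without injectivity, so overlaps can only help the upper bound.
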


\begin{proof} Since $\Gamma$ is bounded, we can take $h= \sup_{x \in \partial E'} |x|<\infty$.
Then, by comparing the area of the {truncated cone 
\begin{equation}\label{cone}
C=\bigcup_{x\in \partial E'}C_x=\bigcup_{x\in \partial E'}\big\{tx: \ \frac{r^2}{|x|^2}|\le t\le 1\big\}
\end{equation}}
to the ``cylinder" of height $h$ and base $\partial E'$, we get
\[ \mathcal{H}^{l+1}(C) \leq \mathcal{H}^{l+1}(V)  \leq Ch \mathcal{H}^l(\partial E') \leq C\varepsilon. \]
This completes the proof.
\begin{figure}[t]
\centering
\includegraphics[width=0.3\textwidth]{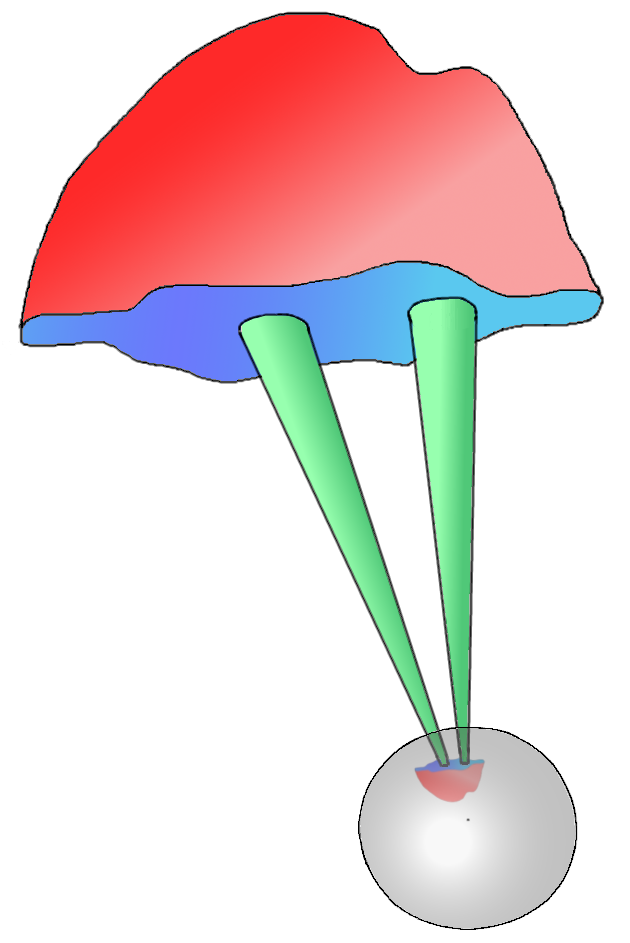} 
\caption{Attaching with cones}
\end{figure}
\end{proof}

\section{Proof of Theorem 1.1}

We start this section by recalling the statement of Theorem 1.1 again.

\begin{proposition} {\it Let $\Omega \subset \mathbb{R}^N$, $N\geq 3$, be a bounded convex open set. Given an integer $l$ with $0 < l < N-2$, suppose $T$ is an $l$-rectifiable current such that $T = \partial M_0$ for some smooth $(l+1)$-dimensional manifold $M_0 \subset \Omega\times\R$. Set 

\[ A^l(T) = \inf \Big\{ \mathbf{M}(\Gamma)\ | \Gamma \subset \Omega \text{ is an } (l+1)\text{-rectifiable current such that } \partial \Gamma = T \Big\},\]

\begin{align*} A_0^l(T) &= \inf \Big\{ |M|\ |\ M\subset\Omega\times\R \text{ is an } (l+1)\text{-smoothly immersed and oriented}\\
&\qquad\qquad\qquad\qquad\qquad\qquad\qquad\qquad\qquad\quad \text{submanifold with } \partial M = T \Big\}.
\end{align*}
Then $A_0^l(T) = A^l(T).$}
\end{proposition}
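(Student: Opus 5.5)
The inequality $A_0^l(T)\ge A^l(T)$ was already obtained in the introduction. If $M$ is any admissible immersed submanifold in $\Omega\times\R$, then $\Pi_\#[M]$ is an $(l+1)$-integral current in $\Omega$ with boundary $T$. Its mass is at most $|M|$, because $\Pi$ is $1$-Lipschitz. For the reverse inequality, I fix $\varepsilon>0$ and build an immersed, oriented $\Gamma_0\subset\Omega\times\R$ with $\partial\Gamma_0=T$ and $|\Gamma_0|\le \mathbf{M}(\Gamma)+C\varepsilon$, where $\Gamma$ is a mass minimizer given by Theorem 2.7. This $\Gamma_0$ is assembled from pieces already constructed in Sections 3--5.

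\textbf{Excision.} Apply Lemma 3.3 to obtain $\varepsilon'\in(\varepsilon/2,\varepsilon)$ and the set $E'=E_{\varepsilon'}$. It satisfies $\mathcal H^{l+1}(E')\le C\varepsilon^2$ and $\mathcal H^l(\partial E')\le C\varepsilon$, and $\partial E'$ may be taken to be a finite union of smooth hypersurfaces of $\mathrm{reg}\,\Gamma$ meeting transversally. By Almgren's theorem (Theorem 2.9), $\Gamma\setminus(E'\cup\mathcal S)$ is a smooth embedded oriented manifold with integer multiplicities. It has boundary $T\cup\partial E'$ (with orientations), and its area is at most $\mathbf M(\Gamma)$. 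Integer multiplicity $k$ on a sheet is handled by taking $k$ slightly displaced parallel copies in the extra $\R$ direction. This keeps the result an immersion while changing the area only by $o(1)$.

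\textbf{Inversion and gluing.} Translate so that $\operatorname{dist}(\Gamma\cup M_0,0)>2$, as in Lemma 4.3. Invert in the sphere $\mathbb S^N_r(0)\subset\R^{N+1}$, with $r$ small. The set $\widetilde B=f\big(M_0\cup\Gamma\setminus(E'\cup\mathcal S)\big)$ is then an immersed manifold with boundary $f(\partial E')$, because the two pieces' $T$-boundaries glue along $f(T)$ with opposite orientations. Its area is at most $C r^{2l+2}(|M_0|+\mathbf M(\Gamma))$. Next attach the truncated cone $C$ of Lemma 5.1, whose boundary is $\partial E'\cup f(\partial E')$ and whose area is $\le C\varepsilon$. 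Set
\[
\Gamma_0=\big(\Gamma\setminus(E'\cup\mathcal S)\big)\cup C\cup \widetilde B .
\]
Its boundary is $T$, since the $\partial E'$ and $f(\partial E')$ boundaries cancel in pairs. Its area satisfies $|\Gamma_0|\le\mathbf M(\Gamma)+C\varepsilon+Cr^{2l+2}$. Choosing $r\le\varepsilon$ and letting $\varepsilon\to0$ gives $A_0^l(T)\le A^l(T)$.

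\textbf{Main obstacle.} The main difficulty is checking that $\Gamma_0$ is genuinely a smoothly immersed oriented submanifold inside $\Omega\times\R$. Several issues arise. The cone must be transverse to the pieces it joins; I would tilt the gluing slightly in the new $\R$ coordinate, lifting $C$ and $\widetilde B$ off $\Omega\times\{0\}$ so that the radial cone direction is never tangent to $\Gamma$. The corners along $\partial E'$ and $f(\partial E')$, and the corners where the pieces of $\partial E'$ meet, must be rounded by a local smoothing whose area cost is $O(\varepsilon)$. The orientations on the cone must match: the cone inherits its orientation from $\partial E'$, and inversion reverses orientation, so the $M_0$-part may need to be reflected. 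Finally, the inverted picture must be placed inside $\Omega\times\R$, which I would do by centering the inversion at a point of $\Omega\times\{1\}$ instead of the origin. Smoothing is local and additive in area, so I expect it to cost only $O(\varepsilon)$. I would do it by standard corner smoothing in collar coordinates, using that $\partial E'$ is piecewise smooth with transverse intersections.
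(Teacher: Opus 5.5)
Your proposal follows the paper's proof essentially step by step: excision by Lemma 3.3, inversion of $M_0\cup\Gamma\setminus(E'\cup\mathcal S)$ by Lemma 4.3, and the truncated cone of Lemma 5.1 made immersed by tilting it in the $x_{N+1}$-direction (the paper's $\widehat{D_\varepsilon}$), followed by local smoothing of the seams and the same area count $\mathbf M(\Gamma)+C\varepsilon$. Your extras are sound refinements, namely parallel copies for higher multiplicity, orientation signs, and centering the inversion at a point of $\Omega\times\R$ off $\Omega\times\{0\}$ and away from $M_0$ (this makes the tilted cone end exactly on $f(\partial E')$ without the paper's map $\Phi$, and keeps everything in $\Omega\times\R$ by convexity); just add to your list of corners the seam along $f(T)$ where $f(M_0)$ meets $f(\Gamma\setminus(E'\cup\mathcal S))$, which the paper prepares by lifting $M_0$ to $\overline{M}_0$.
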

\begin{proof} Let $\Gamma$ be the minimal current that achieves the infimum $A^l(T)$. Let $\mathcal{S}$ be the set of singular points of $\Gamma$.
Fix $\varepsilon'>0$. Choose an $\varepsilon'$ tubular neighborhood $E_{\varepsilon'}$ of $\mathcal{S}$ given by Lemma 3.3 so that

\[\mathcal{H}^{l+1}(E_{\varepsilon'})<C \varepsilon^2,\]  
\[\mathcal{H}^{l}(\partial E')<C \varepsilon,\]
and the corresponding cone $D_\varepsilon$ given by Lemma 5.1 such that \[\mathcal{H}^{l+1}(D_\varepsilon)<C\varepsilon.\]

{To ensure that $M_0\cup \Gamma$ is immersed at the common boundary $T$ between $M_0$ and $\Gamma$, we slightly deform $M_0$ in the positive $x_{N+1}$-direction.
More precisely, let ${\rm{dist}}(\cdot, T):\R^{N+1}\to\R_+$ denotes the distance function to $T=\partial M_0$. Then there exists a $\delta_0>0$ such that 
${\rm{dist}}(\cdot, T)$ is smooth in $2\delta_0$-neighborhood
of $T$. Let $\eta\in C^\infty([0,\infty))$ satisfy $0\le\eta\le 1$, $\eta(t)=t$ for $0\le t\le \delta_0$, and $\eta(t)=2\delta_0$ for $t\ge 2\delta_0$.  Now we define
$\overline{M}_0=\big\{(x, x_{N+1}+\eta({\rm{dist}}((x, x_{N+1}),T)): (x, x_{N+1})\in M_0\big\}$. It is not hard to see that 
$$\mathcal{H}^{l+1}(\overline{M}_0)\le C\mathcal{H}^{l+1}(M_0).$$}

Furthermore, using Lemma 4.3, perform a spherical inversion of $A = \overline{M}_0 \cup T\cup \Gamma \setminus (E'_\varepsilon \cup \mathcal{S})$ through 
{the $N$-dimensional sphere $\mathbb{S}^N_r(0)$ with center $0$ and radius $r$}, choosing {$0<r<< \varepsilon^{\frac{1}{l+1}}$ 
sufficiently small  so that }the spherical inversion image $\widetilde{A}$ of $A$
has 
\[\mathcal{H}^{l+1}(\widetilde{A})\le C r^{2l+1} \mathcal{H}^{l+1}(A)\le C \varepsilon.\]

\smallskip
{To ensure the truncated cone $\mathcal{D}_\varepsilon \cup (\Gamma\setminus(E'\cup\mathcal{S}))$ is immersed at $\partial(\Gamma\setminus(E'\cup\mathcal{S}))$,
we also deform the truncated cone $\mathcal{D}_\varepsilon$ into $\widehat{\mathcal{D}_\varepsilon}$ 
by moving the vertex from $0\in\R^{N+1}$ to $(0',\varepsilon)$. More precisely, define 
\begin{align*}
\widehat{\mathcal{D}_\varepsilon}=\Big\{(tx, (1-t)\varepsilon)\ \big|\ x\in \partial E', \ \frac{r^2}{|x|^2} \le t\le 1\Big\}.
\end{align*}
It is clear that $\partial \widehat{\mathcal{D}_\varepsilon}=\partial E'\cup \big\{\big(\frac{r^2}{|x|^2} x, (1-\frac{r^2}{|x|^2})\varepsilon\big): x\in\partial E'\big\}$.
Also, it is not hard to see that there exists a smooth isometric map $\Phi:\mathbb R^{N+1}\to \mathbb R^{N+1}$ such that 
$$\Phi(\frac{r^2}{|x|^2} x)= \Big(\frac{r^2}{|x|^2} x, \ (1-\frac{r^2}{|x|^2})\varepsilon\Big), \ \forall x\in \partial E'.$$

Define $\widehat{A} =\Phi\big(\widetilde{A}\big)$. Then we have 
\begin{align*}\mathcal{H}^{l+1}\big(\Gamma \setminus (E_{\varepsilon'} \cup \mathcal{S}) \cup \widehat{D_\varepsilon} \cup \widehat{A}\big) 
&= \mathcal{H}^{l+1}(\Gamma) - \mathcal{H}^{l+1}(E_{\varepsilon'}) + \mathcal{H}^{l+1}(\widehat{D_\varepsilon}) 
+ \mathcal{H}^{l+1}(\widehat{A})\\
&\le \mathcal{H}^{l+1}(\Gamma) + C\mathcal{H}^{l+1}({D_\varepsilon}) 
+ C\mathcal{H}^{l+1}(\widetilde{A})\\
& \leq \mathcal{H}^{l+1}(\Gamma) + C \varepsilon
\end{align*}}
for some $C>0$ independent of $\varepsilon$.

Note that $\Gamma \setminus (E' \cup \mathcal{S}) \cup {\widehat{D_{\varepsilon}}
\cup \widehat{A}}$ is a finite union of smoothly immersed and oriented submanifolds.
In fact, $\Gamma \setminus (E'\cup \mathcal{S})$ is a smoothly immersed submanifold with boundary $\partial E'$, and
similarly it is true for $ \widetilde{A}$ (the image under the inversion with respect to the sphere described above) and the smoothly immersed cylinderical
surface {$\widehat{D_\varepsilon}$}. Let us denote $X$ as  an abstract manifold {of dimension $(l+1)$} and let $F : X\to \R^{N+1}$ be a smooth immersion
{whose image is} $\Gamma \setminus (E'\cup \mathcal{S})$,
namely, $dF$ has full rank {$(l+1)$} at every point in $X$. The immersion for $ \widetilde{A}$ would simply be the composition of $F$ 
{with the spherical} inversion map {described above}. With this,
the cylindrical surface {$\widehat{D_\varepsilon}$} can be viewed as a smooth immersion $G(x,t): \partial X \times [0,1]\to {\R^{N+1}}$.
Now  the rank of the differential of the map $G$  equals to  the rank of the differential of  the restriction $F\big|_{\partial X}$ plus one. Hence  $dG$ is  of full rank $(l+1)$.
Since the immersion $F$ {for $\Gamma\setminus (E'\cup\mathcal{S})$ and the immersion for $\widehat{D_\varepsilon}$} has the same restriction on $\partial X$, it is not hard to see one can smooth locally $F$ and that immersion for {$D_\varepsilon$ to get} a global smooth immersion of $X\cup(\partial X \times [0,1])$ into
 {$\R^{N+1}$}.  To see the latter fact, one considers $X_{\delta}$ which is the $\delta$ neighborhood of $\partial X$ in $X$. For $\delta>0$ sufficiently small,  one may view $X_{\delta}$ 
as $\partial X \times [-\delta, 0]$, and  $F$ is the smooth immersion from $X_{\delta}$  into a small open neighborhood of
$\partial E'$ into $\Gamma \setminus (E'\cup \mathcal{S}).$  For $t \in [-\delta, 0]$, 
$F_{x}$  and $F_t$ together forms a $(l+1)$ tangent frame of the image $F(X_{\delta})$, where $x \in  \partial X$
and  $l = {\rm{dim}} \ \partial E{_\varepsilon'}$.  Now what one needs to do is to extend the immersions $F$ defined on  $\partial X \times [-\delta, 0]$ and $G$ defined on $\partial X \times [\delta, 1]$ smoothly into  $\partial X \times [0, \delta]$ to become an immersion on   $\partial X \times [-\delta, 1]$ .  It is obvious what to do when $\partial X $ is a point, one simply replaces the image as a union of two smooth short arcs which intersects at a point (it could be either transversal or tangential) by a smoothly immersed slightly longer arc (in the neighborhood of the (outside) boundary points of the whole union of two arcs, one may even assume this
immersed arc are the same as the original two arcs).  For every point $x^* \in  \partial X$, one can 
{repeat what we did above}  in small neighborhood of $x^*$, and locally the resulting smooth arc changing smoothly  
when $x$ varies near $x^*$. In other words, one simply constructs a smoothly varying family (indexed by $x$) of smooth immersed arcs described as above with given given locations and its tangents at end points.  It is then by a partition of unity, one can construct such an extended  immersion on  
$\partial X \times [0, \delta]$. Similarly, one can do  smoothing near the common boundaries of {$\widehat{D_\varepsilon}$ and $\widehat{A}$}. If one does above construction with small $\delta$ so that the $(l+1)$-dimension Hausdorff measure of the image of $F (\partial X \times [0, \delta])$ is small 
(and goes to zero as $\delta$ goes to zero, and in particular much less than $\varepsilon$), then
we call the resulting approximation by $M_\varepsilon$ so that
\[ A^l_0(T) \leq \mathcal{H}^{l+1}(M_\varepsilon) \leq \mathbf{M}(\Gamma) + C\varepsilon,\]
noting that $M_\varepsilon$ satisfies $\partial M_\varepsilon= T$.
Then we can not have $A^l(T) < A_0^l(T)$, but we {have already shown $A^l(T) \leq A_0^l(T)$. Therefore,} $A^l_0(T)=A^l(T).$
\end{proof}

\section{An example}

We have shown that  when $T = \partial M_0$  is the boundary of some smooth manifold, we can approximate the area minimizing current
{$\Gamma$, with $\partial\Gamma=T$,} by smooth manifolds. We can pose the question: perhaps for such a $T$, we can always find a smooth manifold $M$ bounded by $T$ such that {the area of $M$ equals to} the area of the minimal current - in which case 
Theorem 1.1 can be stated in terms of a minimum instead of an infimum. We now show this is not the case.

Suppose we have a smooth $l$-dimensional submanifold $T_1\subset\R^N$ that has no boundary that is not null-cobordant, so it does not bound any $(l+1)$-dimensional smooth manifold. However, according to Almgren \cite{Almgren}, we know it 
{is} the boundary of a minimal current $\Gamma_1$ which has some singular set.

If we now make a copy of $T_1$ by taking a fixed $v \in \mathbb{R}^N$ with $|v| = L >> \text{diam}(T_1)$, and letting $T_2 = \{ x + v: x \in T_1 \}$, there  is a smooth orientable $(l+1)$-dimensional manifold $M_0$ such that $\partial M_0 = T_1 \cup T_2$, namely, the cylinder

\[ M_0 = \big\{ x+ tv\ \big|\  x \in T_1, t \in [0,1] \big\} .\]
Then we can again consider the quantities

\[ A^l := \inf \Big\{ \mathbf{M}(\Gamma)\big|\Gamma\subset\R^N \text{ is an } (l+1)\text{-rectifiable current such that } \partial \Gamma = T_1 \cup T_2 \Big\},\]
and
\[ A^l_0 := \inf \Big\{ |M|\ \big|\ M\subset\R^{N+1} \text{ is a } \text{smooth oriented manifold with } \partial M = T_1 \cup T_2 \Big\} .\]

Let $\Gamma_2 = \Gamma_1 + \{v\}$. Then $\partial (\Gamma_1 \cup \Gamma_2) = T_1 \cup T_2$. We note that $T_1$ and $T_2$ not being null-cobordant implies that $\Gamma_1 \cup \Gamma_2$ can't be smooth. Therefore, if there were a smooth minimal surface, it can't be the disjoint union of two disconnected components $M_1 \cup M_2$, with $T_1 = \partial M_1$ and $T_2 = \partial M_2$.

However, we also can not have a smooth minimal surface $M$, with $\partial M = T_1\cup T_2$,  that is connected. To show this, take any point $x \in T_1$, and a plane $\pi_v$ normal to $v$ that passes through the point $x +\frac{v}{2}$. If there exists some $(l+1)$-dimensional smooth minimal surface $M$ with boundary $T_1 \cup T_2$, it would intersect $\pi_v$ at some point $y $.

But, {by the area monotonicity formula in Theorem 2.8}, we have that
\[\big|M \cap B_r(y)\big| \geq \alpha_{l+1} r^{l+1}.\]
Taking $r \to \frac{L}2$, we must ahve
$$\big|M\big|\ge\big|M \cap B_{\frac{L}2}(y)\big|
\ge \alpha_{l+1} \big(\frac{L}{2}\big)^{l+1}.$$
On the other hand, it is easy to see that $\big|M\big|\le CL$. 
We get the desired contradiction, since $l\ge 1$ and $L$ can be chosen arbitrarily large.

 Although there will not be a smooth manifold that realizes the infimum $A^l$, we can still approximate the minimal current realizing $A^l$ using the method described earlier.

 \section*{Acknowledgments}

The first author is partially supported by the NSF DMS 2247773. This article is a part of the Master Thesis of the second author under the supervision of the first author. The third author is partially supported by NSF DMS 2453789 and Simons Travel Grant TSM-00007723.
The authors thank the anonymous referee for the careful reading of the manuscript and valuable comments, and for 
bringing to our attention a relevant recent preprint \cite{Almg}. 

\addcontentsline{toc}{section}{Reference}

\end{document}